\title{Symplectic Branching through Crystals}
\author{Bárbara Muniz}
\date{\AdvanceDate[-1]{\today}}
\newcommand{\qbinom}[3][q]{\left[\begin{matrix} #2 \\ #3 \end{matrix}\right]_{#1}}
\newcommand{\e}{\widetilde{e}_{i}}
\newcommand{\f}{\widetilde{f}_{i}}
\begin{document}

\maketitle

\begin{abstract}
We give an alternative proof of Naito--Sagaki's conjecture, which states that the restriction of $gl_{2n}(\CC)$-representations to $sp_{2n}(\CC)$ can be described in terms of crystals.
Using the tableau model for crystals, we construct an explicit and self-contained bijection between their highest weight elements and Sundaram's branching model.
\end{abstract}

\vspace{5pt}
\section*{Introduction}

A classic problem in representation theory, known as \emph{symplectic branching}, asks how $gl_{2n}(\CC)$-representations decompose when viewed as the restricted $sp_{2n}(\CC)$-representations. That is, given finite dimensional (irreducible) $gl_{2n}(\CC)$ and $sp_{2n}(\CC)$ representations $V$ and $W$, we would like to explicitly determine 
\[[V:W]:=dimHom_{sp_{2n}}(V,W)\]
which should be regarded as the multiplicity of $W$ in $V$.\\

This question was first addressed by Littlewood \cite{littlewood},  who proved a formula for sufficiently large $n$. Later, Sundaram \cite{Sundaram} adapted this rule for the general case, fully solving the problem.

\begin{thr*}[Sundaram]
\label{sundaram}
Let $\lambda$ and $\mu$ be two partitions with at most $2n$ and $n$ parts, respectively.
The multiplicity of the irreducible component $V_{sp_{2n}}(\mu)$ in $V_{gl_{2n}}(\lambda)$, as the restricted $sp_{2n}(\CC)$-representation, is given by
    \[[V_{gl_{2n}}(\lambda) : V_{sp_{2n}}(\mu)]=\sum_{\delta}(sp_nc)^{\lambda}_{\mu, (2\delta)'}\]
where $(sp_nc)^{\lambda}_{\mu, \nu}$ is the number of $n$-symplectic Littlewood--Richardson tableaux of shape $\lambda\setminus \mu$ and weight $\nu$.
\end{thr*}

Another branching rule, due to Kwon \cite{kwon}, provides an answer for a wider range of branchings. These descriptions were then applied by Lecouvey--Lenart \cite{lecouvey_lenard} to provide a combinatorial formula of generalized exponents in type C. A connection between Kwon's and Sundaram's rules was shown by Kumar--Tores \cite{kumar-torres}, who used hive models \cite{knutson_tao} to construct a bijection between the tableaux counted by them.\newpage

Our aim in this work is to show that these multiplicities can be read from the crystals when using their tableaux model.\\

\begin{restatable}{main_thr}{branching}\label{branching}
Let $\lambda$ and $\mu$ be two partitions with at most $2n$ and $n$ parts, respectively.\\
The multiplicity of the irreducible component $V_{sp_{2n}}(\mu)$ in $V_{gl_{2n}}(\lambda)$ as the restricted $sp_{2n}(\CC)$-representation is the number of $sp_{2n}$-highest weight semistandard Young tableaux $Y$ of shape $\lambda$ and weight $\mu$.\\
\iffalse That is, 
    \[[V_{gl_{2n}}(\lambda) : V_{sp_{2n}}(\mu)]= \#\left\{U_q(sp_{2n})\text{-highest weight element }v_Y\in \Bscr_{gl_{2n}}(\lambda)_{\mu}\right\}\]
where $\Bscr_{gl_{2n}}(\lambda)$ is the crystal of $V_{gl_{2n}}(\lambda)\subset  V^{\otimes N}$ generated by the reading of $Y_0(i, j)=i$.
\fi
\end{restatable}

This result was first conjectured by Naito--Sagaki \cite{naito-sagaki}, in terms of the equivalent Littelmann's path model \cite{littelman}. They also confirmed it in the special cases where the weight $\lambda$ corresponds to a hook or to a rectangular diagram. The conjecture was then proven in full generality by Schumann--Torres \cite{schumann_torres}, who constructed a bijection between Sundaram's symplectic tableaux and the $U_q(sp_{2n})$-highest weight vectors in the crystal. Their construction made use of some heavy machinery; specifically, it builds upon Burge's correspondence \cite{burge} and a symplectic version of the Robinson-Schensted-Knuth correspondence constructed by Sundaram \cite{sundaram_thesis}.\\

Recently, a new proof to the conjecture was presented by Naito--Suzuki--Watanabe \cite{naito-suzuki-watanabe}, using Watanabe's alternative branching rule \cite{watanabe}. 
In these works, the question is considered by using a different deformation of $sp_{2n}(\CC)$, built as a coideal subalgebra of $U_q(gl_{2n})$  to  have a natural embedding \cite{Kolb}. \\

Our approach is similar to the one presented by Schumann--Torres, in the sense that we also prove the statement by means of a bijection between the same sets. However, our bijection is elementary and self-contained --- it can be proven independently, without relying on intricate theory or external correspondences.\\

We show that the $sp_{2n}$-highest weight tableaux can be split into a fixed part of shape $\mu$ and a semistandard part of shape $\lambda \setminus \mu$ filled by ordered $\{i, \overline{\,i\,}\}$-pairs, as illustrated below
\[\begin{tikzpicture}[scale=0.5]
    \vspace{-10pt}
    \draw (0,0) -- (0,4) -- (4,4) -- (4,2) -- (2,2) -- (2,1) -- (1,1) -- (1,0) -- cycle;
    \filldraw[fill=blue!70] (0,2) -- (0,4) -- (2,4) -- (2,3) -- (1,3) -- (1,2) -- cycle;
    \node[anchor=center] at (-1,2.2){$Y=$};
    \node[anchor=center] at (0.5,3.4) {\large{$\mu$}};
    \node[anchor=west] at (2.5,0.5) {$\{i, \overline{\,i\,}\}$-pairs};
    \draw[-stealth] (2.5,0.5) to[bend left=40] (1.4,1.4);
\end{tikzpicture}
    \vspace{-8pt}
\]
and each of these ordered pairs can be seen as a copy of $\epsilon_1+\epsilon_2=\tinydiagram{1,1}$\,, resulting in a weight of the desired form $(2\delta)'$.\\ 

Intuitively, we see our bijection as a map between skew-tableaux of the same shape, working as a translator of type C conditions into their type A counterparts. In practice, we construct two simple maps $\iota_{sp}$ and $\iota_{LR}$ which delete one by one the $\{i, \overline{\,i\,}\}$-pairs and their corresponding fillings in $(2\delta)'$. This correspondence allows us to then define our bijection inductively in the size of the skew-tableaux as $F=\iota_{sp}^{-1}\circ F\circ \iota_{LR}$.\newpage

We first tackle this problem in the stable case, where $n>\ell(\lambda)$ and, therefore, every Littlewood--Richardson tableau is $n$-symplectic. Then, through this approach, what differs when $n$ is not big enough becomes clear: some of Littlewood--Richardson tableaux are mapped to tableaux with fillings bigger than $n$, which are not in $\Bscr_{gl_{2n}}(\lambda)$. The tableaux that cause this problem can be identified and we can extend the result to the general case.

\subsection*{Acknowledgements}
I would like to thank Prof. Catharina Stroppel for suggesting me this problem and advising me on the master's thesis that would culminate in this paper. I am also grateful to Prof. Jacinta Torres for her support in getting this paper to the finish line.

This research was partially supported by 
{\it Narodowe Centrum Nauki}, grants 2021/43/D/\\ST1/
02290 and 2021/42/E/ST1/00162.
For the purpose of Open Access, the author has applied a CC-BY public copyright license to any Author Accepted Manuscript (AAM) version arising from this submission.

\section{Preliminaries}\subsection{The general and symplectic Lie algebras}
Consider the \emph{general Lie algebra} $gl_{n}(\CC)$ generated over $\CC$ by
\[e_i=E_{i, i+1} \hspace{1cm} f_i=E_{i+1, i}
\hspace{1cm} \hfrak=span_{\CC}\{E_{i,i}\}_{i\leq n}
\]
and on the \emph{symplectic Lie algebra} $sp_{2n}(\CC)$ generated  over $\CC$ by
\begin{align*}
&e_i= E_{i, i+1}-E_{i, i+1}^{aT} &\hspace{1cm}& e_n=E_{n, n+1}\\
&f_i=E_{i+1, i}-E_{i+1, i}^{aT} &\hspace{1cm}& f_n=E_{n+1, n}\\
&h_i=E_{i,i}-E_{i+1, i+1}-E_{i,i}^{aT}+E_{i+1,i+1}^{aT} &\hspace{1cm}& h_n=E_{n,n}-E_{n+1, n+1}
\end{align*}
where $aT$ is the anti-transpose and $E_{i,j}$ is the matrix with a single non-zero entry 1 at $(i,j)$.\\
These are also referred to as the Lie algebras of type $A_{n-1}$ and $C_{n}$, respectively.\\

Their dominant integral weights, which correspond to the irreducible representations are
\begin{align*}
    &X^+_{gl}=\{\lambda=\lambda_1\epsilon_1+\cdots+\lambda_n\epsilon_n\mid \lambda_1\ge \cdots \ge \lambda_n \in \ZZ\}\\
    &X^+_{sp}=\{\lambda=\lambda_1\epsilon_1+\cdots+\lambda_n\epsilon_n\mid \lambda_1\ge \cdots \ge \lambda_n \ge 0 \in \ZZ\}
\end{align*}
where $\epsilon_i$ is the weight sending a diagonal matrix to its $i-th$ entry.

\begin{rmk}
    In type $A_{n-1}$, we restrict our attention to $X^+_{\ge 0} = X^+ \cap \text{span}_{\ZZ}\{\epsilon_i\}_{i\leq n}$ because every irreducible representation corresponds to an element in $X^+_{\ge 0}$ via $V(\lambda)=V(\lambda_{\ge 0})\otimes tr^{\otimes \lambda_n}$ where $\lambda_{\ge 0}=\lambda - \lambda_n (1, 1, \cdots, 1)$.
\end{rmk}

\subsection{Young tableaux}

Let $\lambda=(\lambda_1 \geq \cdots \geq \lambda_n)$ and $\mu=(\mu_1\geq \cdots \geq \mu_m)$ be partitions and let $\Acal$ be a countable totally ordered set. A \emph{Young tableau} $Y$ of \emph{shape} $\lambda$ in the \emph{alphabet} $\Acal$ is an assignment of an element of $\Acal$ to each entry $(i, j)$ with $j\leq \lambda_i$ and \emph{skew tableau} of shape $\lambda\setminus\mu$ in the same alphabet is such an assignment to each entry $(i, j)$ with $\mu_i \leq j\leq \lambda_i$. We call the images $a=Y(i,j)$ its \emph{fillings} and define its \emph{weight} to be the partition $\nu=(\nu_1 \geq \cdots \geq \nu_n)$ where $\nu_k=\#\{(i,j)\mid Y(i, j)=a_k\}$.\\

We say a tableau is \emph{semistandard} if the entries weakly increase along rows and strictly increase along columns, i.e. $Y(i, j)<Y(i+1, j)$ and $Y(i,j)\leq Y(i, j+1)$ for all $i,j$.\\

Furthermore, we define \emph{(far-easter) reading} of a tableau $Y$ of shape $\lambda$ to be a vector $v_Y\in span_\CC\Acal^{\otimes N}$ for $N=|\lambda|$ obtained by recording the fillings of each column top to bottom and then concatenating them right to left. 
\begin{ex} 
\[\ytableausetup{smalltableaux}
Y=\ytableaushort{123,45,6}\;\;\longmapsto \;\; v_{Y}=\boxed{3}\otimes\boxed{2}\otimes\boxed{5}\otimes\boxed{1}\otimes\boxed{4}\otimes\boxed{6}\]\vspace{2pt}
\end{ex}

A \emph{Littlewood--Richardson tableau} is a semistandard skew-tableau such that every prefix of its (far-eastern) reading corresponds to a partition, i.e. has at least as many $\boxed{i}$ s as $\boxed{i\text{+}1}$ s.

\begin{rmk}Littlewood--Richardson tableaux are usually defined using the middle-eastern reading (which reads rows first and then concatenate them) but these are equivalent.
\end{rmk}
 
\subsection{Quantum groups and crystals}
The quantum groups we are interested in are deformations of universal enveloping algebras.

\begin{defi}
    Let $\gfrak$ be a Kac-moody Lie algebra associated to the Cartan matrix $A$ with simple roots and coroots $\pi=\{\alpha_i\}_{i\leq n}$ and $\pi^\vee=\{h_i\}_{i\leq n}$.
    The \emph{quantum group} $U_q(\gfrak)$ associated to $\gfrak$ as the $\CC(q)$-algebra generated by $e_i, f_i$ for $i\leq n$ and the formal symbols $q^h$ for $h\in \hfrak$ with the following relations
\[\begin{matrix}
    q^0=1  &\hspace{1cm}& q^{h}q^{h'}=q^{h+h'}\\
    \\
    q^h e_i=q^{-\alpha_i(h)h}e_i &\hspace{1cm}& q^hf_i=q^{\alpha_i(h)h}f_i\\
\end{matrix}\]
\[
    e_i f_j - f_j e_i =\delta_{ij}\dfrac{K_i-K_i^{-1}}{q_i-q_i^{-1}}\\
\]
and $q$-Serre relations for $i\neq j$
\[S_{ij}^+=\sum_{s\leq 1-a_{ij}}(-1)^s\qbinom[q_i]{1-a_{ij}}{s}e_i^{1-a_{ij}-s}e_je_i^s=0\]
$$S_{ij}^-=\sum_{s\leq 1-a_{ij}}(-1)^s\qbinom[q_i]{1-a_{ij}}{s}f_i^{1-a_{ij}-s}f_jf_i^s=0$$
where $q_i=q^{d_i}$ and $K_i=q^{d_ih_i}$ such that $D=(d_i)$ is the diagonal matrix symmetrizing $A$.
\end{defi}

\iffalse
They are endowed with a Hopf algebra structure (Section 3.1 of \cite{hong_kang}) which is given by the comultiplication
\begin{gather*}
\Delta(q^h)=q^h\otimes q^h\\
\Delta(e_i)=e_i\otimes K_i^{-1}+1\otimes e_i\\
\Delta(f_i)=f_i\otimes 1 + K_i\otimes f_i
\end{gather*}
the counit
\[\varepsilon(q^h)=1 \hspace{1cm} \varepsilon(e_i)=\varepsilon(f_i)=0\]
and the antipode 
\[S(q^h)=S(q^{-h})\hspace{1cm} S(e_i)=-K_ie_i \hspace{1cm} S(f_i)=-K_i^{-1}f_i\]
\fi

The representation theory of quantum groups $U_q(\gfrak)$ for generic $q$ mirrors the one of Lie algebras (Chapter 5 \cite{Jantzen}). In particular, we have complete reducibility and classification of irreducible representations.
\begin{prop}\label{compred}
Every finite dimensional irreducible $U_q(\gfrak)$-representation $V$ can be written as a direct sum of irreducible representations.
\end{prop}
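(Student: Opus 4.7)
The plan is to establish complete reducibility by adapting the classical proof for semisimple Lie algebras to the quantum setting. Reading the statement together with the preceding remark, the intended claim is that every finite dimensional $U_q(\gfrak)$-module $V$ decomposes as a direct sum of irreducibles (the word ``irreducible'' on the left-hand side of the statement appears to be a typographical artifact, since otherwise the claim is tautological). The first step is a standard reduction: it suffices to show that any short exact sequence $0 \to W \to V \to V/W \to 0$ with $V$ finite dimensional splits, and in fact, by replacing $V/W$ with an irreducible quotient and $W$ with an irreducible submodule when possible, one may reduce to the case where both $W$ and $V/W$ are irreducible.

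The key ingredient would be a quantum analog of the Casimir operator. Following Drinfeld's construction, one produces a central element $C$ (living in a suitable completion of $U_q(\gfrak)$) whose action on an irreducible highest weight module $L(\lambda)$ is a scalar $c_\lambda$, and the crucial property to verify is that $c_\lambda \neq c_\mu$ whenever $\lambda \neq \mu$ are distinct dominant integral weights for generic $q$. Once this separation is in hand, the generalized eigenspace decomposition of $V$ with respect to $C$ yields a $U_q$-stable splitting into isotypic components indexed by central characters, which handles every case except the one where $W$ and $V/W$ share the same highest weight.

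The hard part will be this remaining case. Here one would argue via highest weight vectors directly: pick a highest weight vector $\bar v \in V/W$, lift it to some $v \in V$, and study the $U_q$-submodule it generates, using the universal property of the corresponding Verma module together with the finite dimensionality of $V$ to force a splitting. Alternatively, and probably more efficiently given that Chapter 5 of Jantzen is already cited, the plan is to invoke for generic $q$ the equivalence between the category of finite dimensional type 1 $U_q(\gfrak)$-modules and finite dimensional $\gfrak$-modules, which preserves weight multiplicities and reduces complete reducibility to Weyl's classical theorem. In the present paper, which uses this proposition only as a tool for manipulating crystals, deferring to Jantzen's treatment via this equivalence is the most economical route and is likely how the author intends to handle it.
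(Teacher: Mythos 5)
The paper does not actually prove this proposition: it is stated as standard background, with the surrounding sentence deferring entirely to Chapter~5 of Jantzen, which is exactly the route you recommend in your closing paragraph --- so your final conclusion and the paper's treatment coincide, and you are also right that the first ``irreducible'' in the statement is a typographical slip that would otherwise make the claim vacuous. Your sketch of the underlying argument is the standard one, but one step is asserted too strongly: it is \emph{not} true that $c_\lambda \neq c_\mu$ for all distinct dominant integral weights. The (quantum) Casimir eigenvalue is controlled by $(\lambda+\rho,\lambda+\rho)$, and distinct dominant weights can share this value --- already for $sl_3$ the two fundamental weights do, by the diagram symmetry exchanging them, so the standard and dual natural representations are not separated by the Casimir. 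What the splitting argument actually needs, and what is true, is separation when the two highest weights are comparable: a non-split extension of $L(\lambda)$ by $L(\mu)$ forces $\mu<\lambda$ or $\lambda<\mu$ in the dominance order, and for comparable dominant weights $(\mu+\rho,\mu+\rho)<(\lambda+\rho,\lambda+\rho)$, so the central element does distinguish them. With that correction your outline is the proof in Jantzen; for the purposes of this paper, simply citing that reference (as the author does) is entirely adequate.
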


\begin{prop}\label{irrep}
The finite dimensional irreducible $U_q(\gfrak)$-representations are indexed by dominant integral weights $\lambda \in X^+$.
\end{prop}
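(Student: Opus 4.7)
The plan is to adapt the classical highest-weight classification of finite-dimensional semisimple Lie algebra representations, following the route of Jantzen and Lusztig: for each $\lambda \in X^+$ I would build a Verma module $M(\lambda)$, extract its unique simple quotient $L(\lambda)$, and carry out a rank-one reduction along each simple root to show that $L(\lambda)$ is finite-dimensional exactly when $\lambda$ is dominant integral.

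First, I would establish the weight-space decomposition. Because the $K_i$ commute and act invertibly, a finite-dimensional $V$ splits, possibly after extending scalars, as $V = \bigoplus_\mu V_\mu$ with $V_\mu = \{v \mid K_i v = \varepsilon_i q_i^{\mu(h_i)} v\}$ for some signs $\varepsilon_i \in \{\pm 1\}$. I would restrict attention to the \emph{type 1} modules (all $\varepsilon_i = +1$), since the other types differ only by a sign character twist and yield no new classification data. Picking a weight $\lambda$ maximal in the finite set of weights of an irreducible type 1 module produces a nonzero $v_\lambda$ annihilated by every $e_i$, and it generates $V$ by irreducibility.

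For the necessity of $\lambda \in X^+$ and for uniqueness, the tool is the standard $q$-analog of the $\mathfrak{sl}_2$-commutator,
\[ e_i f_i^{m+1} v_\lambda \;=\; [m+1]_{q_i}\,[\lambda(h_i)-m]_{q_i}\, f_i^{m} v_\lambda, \]
which, combined with the finite-dimensionality of $V$, forces $\lambda(h_i) \in \ZZ_{\geq 0}$ for every $i$. Two irreducibles sharing the same highest weight are then isomorphic because both are simple quotients of $M(\lambda)$, which has a unique maximal proper submodule (the sum of all submodules not containing $v_\lambda$, seen to be proper by weight considerations).

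For existence, I would define $L(\lambda)$ as this simple quotient and verify finite-dimensionality. The identity above shows that $f_i^{\lambda(h_i)+1} v_\lambda$ is a primitive vector in $M(\lambda)$ and thus vanishes in $L(\lambda)$, so each $f_i$, and by a symmetric argument each $e_i$, acts locally nilpotently, making $L(\lambda)$ integrable. A Weyl-group invariance argument then confines its set of weights to the finite convex hull $W \cdot \{\mu \leq \lambda\}$, while each weight space is finite-dimensional because $U_q^-(\gfrak)$ acts by finitely many PBW monomials modulo the $q$-Serre relations. The principal obstacle is precisely this finite-dimensionality step: it is exactly where the $q$-Serre relations intervene in an essential way, and one cannot sidestep a careful integrable-module argument of Lusztig--Jantzen type.
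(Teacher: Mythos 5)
Your proposal is essentially correct and is the standard highest-weight classification; the paper offers no proof of this proposition at all, deferring to Chapter 5 of \cite{Jantzen}, which is precisely the Verma-module, rank-one reduction, and integrability argument you sketch. The one implicit assumption worth flagging is that $\gfrak$ is of finite type, so that the Weyl group is finite and your convex-hull step actually bounds the set of weights --- for a general Kac--Moody algebra the statement as written would fail --- but this is harmless here since the paper only uses $gl_{2n}$ and $sp_{2n}$.
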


%\subsection{Crystals}
There are two important specializations of $U_q(\gfrak)$, $q=1$ and $q=0$, which can't be done trivially since the quantum group relations are ill-defined. 
Instead, we define, respectively, the \emph{classical limit} (Section 3.4 \cite{hong_kang}) and the \emph{crystal limit} of $U_q(\gfrak)$-representation.
While the first one allows us to move between the quantum and classical representations, the second one provides \emph{crystals}, a powerful tool to study these representations.
Morally, crystals correspond a very well-behaved basis on $q=0$, composed of weight vectors and stable under the action of the standard generators.\\

To formalize this definition, we consider the ring $A_0=\{\tfrac{f}{g}\in \CC[q]\mid g(0)\neq 0\}$ of rational functions at $q=0$ and alter our standard operators, considering instead their weighted-analogs.

\begin{defi}
   The \emph{Kashiwara operators} $\e$ and $\f$ act on 
 \[v=\sum_{k<N}\dfrac{1}{[k]_q!}f^kv_k \htext{with} v_k \in ker(e_i)\]
 via
\[\e(v)=\sum_{k<N} \dfrac{1}{[k-1]_q!} f_i^{k-1}\; v_k \hspace{2cm} \f(v)=\sum_{k<N} \dfrac{1}{[k+1]_q!}f_i^{k+1}\; v_k\]
\end{defi}
\begin{rmk}
    This action well defined in any finite-dimensional representation due to complete reducibility (Lemma 4.1.1 in \cite{hong_kang}).
\end{rmk}

\begin{defi}
     A \emph{crystal basis} of a $U_q(\gfrak)$-representation $V$ is a pair $(\Lscr, \Bscr)$ where $\Lscr$ is a free $A_0$-submodule of $V$ such that
\begin{enumerate}[(L1)]
    \item $\Lscr$ is finitely generated over $\gfrak$ and generates $V$ over $\CC(q)$;
    \item $\Lscr=\bigoplus_{\lambda\in X} \Lscr_\lambda$ where $\Lscr_\lambda=\Lscr \cap V_\lambda$ and $X$ are the integral weights;
    \item $\Lscr$ is stable under $\f$ and $\e$.
\end{enumerate}
\newpage
\noindent and $\Bscr$ is a $\CC$-vector space basis of $\Lscr/q\Lscr$ satisfying
\begin{enumerate}[(B1)]
    \item $\Bscr=\bigcup_{\lambda} \Bscr_\lambda$ where $\Bscr_\lambda=\Bscr\cap \Lscr_\lambda/q\Lscr_\lambda$
    \item $\f\Bscr, \e\Bscr\subseteq \Bscr\cup \{0\}$
    \item For all $b, b'\in \Bscr$, $b'=\f b$ if and only if $\e b'=b$
\end{enumerate}
\end{defi}

We say two crystal bases are isomorphic if there is an $A_0$-isomorphism between the lattices that restricts to a bijection of the bases and commutes with the Kashiwara operators.

\begin{thr}[Existence and Uniqueness]
    Every finite dimensional $U_q(\gfrak)$-representation has a unique crystal basis $(\Lscr, \Bscr)$, up to isomorphism.
\end{thr}
\begin{corl}
    A $U_q(\gfrak)$-representation has a connected crystal graph if and only if it is irreducible.
\end{corl}

These bases provide a visual description of the representation as a colored oriented graph, called the \emph{crystal graph}, with nodes given by $\Bscr$ and edges given by
\[b\overset{i}\longrightarrow\f (b)\]

Moreover, crystals are compatible with the tensor product and the Kashiwara action in the product of two bases is described using functions
\[\varphi_i(b)=max_{n}\{\f^n b\neq 0\} \htext{and} \varepsilon_i(b)=max_{n}\{\e^n b\neq 0\}\]

\begin{thr}[Tensor Product Rule]\label{tprod}
    Let $V_i$ be $U_q(\gfrak)$-representations with crystal basis $(\Lscr_i, \Bscr_i)$ for $i=1, 2$. Then $V_1\otimes V_2$ has crystal basis $(\Lscr_1\otimes \Lscr_2,\; \Bscr_1\times\Bscr_2)$ and the Kashiwara operator act via
    \[\f(b_1\otimes b_2)= \left\{\begin{matrix}
    \f(b_1)\otimes b_2 & \text{if } \varphi_i(b_1)>\varepsilon_i(b_2)\\
    b_1\otimes \f(b_2) & \text{if } \varphi_i(b_1)\leq\varepsilon_i(b_2)
    \end{matrix}\right.\]
    \[\e(b_1\otimes b_2)= \left\{\begin{matrix}
    \e(b_1)\otimes b_2 & \text{if } \varphi_i(b_1)\geq\varepsilon_i(b_2)\\
    b_1\otimes \e(b_2) & \text{if } \varphi_i(b_1)<\varepsilon_i(b_2)
    \end{matrix}\right.\]
\end{thr}
\begin{rmk}
    Note that $\varphi_i(b)-\varepsilon_i(b)=\langle h_i, wt(b)\rangle$, as can be seen by restricting to the subalgebra $\gfrak_i\cong U_q(sl_2)$ generated by $e_i, f_i$ and $h_i$
\end{rmk}

For proofs and a detailed account of the constructions in this subsection we refer the reader to Chapters 4 and 5 of \cite{hong_kang}.

{\section{Restricting crystals}

To investigate the symplectic branching using crystals we need to describe the $U_q(sp_{2n})$-highest weight vectors in the crystals of the irreducible $U_q(gl_{2n})$-representations $\Bscr_{gl_{2n}}(\lambda)$.\\

Let $V$ be the natural representation of $U_q(gl_{2n})$ and $U_q(sp_{2n})$ with standard basis $\{v_i\}_{i\leq 2n}$.
Set $\Lscr= span_{A_0}\{v_i\}_{i\leq 2n}$ and define distinguished $\Lscr/q\Lscr$ elements
\[\boxed{\,i\,}=v_i+q\Lscr \htext{and} \boxed{\overline{\,i\,}}=(-1)^{n-i}v_{2n-i+1}+q\Lscr\]
such that we can write $\Lscr/q\Lscr = span_{\CC}[n] =span_{\CC}\Acal_n$ for the alphabet
\[\Acal_n=\{1<2<\cdots<n<\overline{n}<\cdots<\overline{2}<\overline{1}\}\]

We will use the following combinatorial model for the type $A$ crystals, as found in \cite{hong_kang}.

\begin{prop}\label{typeA}
    Let $\lambda\in X^+_{\ge 0}$ and $V_{gl_n}(\lambda)$ be the corresponding irreducible representation of $U_q(gl_{n})$. The associated crystal is given by 
    \[\Bscr_{gl_{n}}(\lambda)=\begin{Bmatrix}
\text{semistandard}\\\text{Young tableaux on }[n]\\ 
\text{with shape }\lambda
\end{Bmatrix}\]
    with edges $Y\overset{i}{\longrightarrow} Y'$ for all $\f(v_Y)=v_{Y'}$.
\end{prop}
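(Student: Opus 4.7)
The plan is to bootstrap from the crystal of the natural representation $V$ using Theorem~\ref{tprod}. First, I would check directly from the $U_q(gl_n)$-action on $V$ that $\Bscr(V)$ is the chain
\[\boxed{\,1\,}\overset{1}\longrightarrow\boxed{\,2\,}\overset{2}\longrightarrow\cdots\overset{n-1}\longrightarrow\boxed{\,n\,},\]
with $\boxed{\,i\,}$ a weight vector of weight $\epsilon_i$. Iterating the tensor product rule then realizes $\Bscr(V^{\otimes N})$, for $N=|\lambda|$, as the set of all length-$N$ words in $[n]$, whose Kashiwara operators unfold into the standard signature rule: for fixed $i$, scan the word labelling each $\boxed{\,i\,}$ by $+$ and each $\boxed{i{+}1}$ by $-$, cancel adjacent $-+$ pairs until a reduced signature of the form $+\cdots+-\cdots-$ remains, and let $\f$ act on the position of a designated surviving $+$ (and $\e$ on a designated surviving $-$).

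Next, I would embed the semistandard Young tableaux of shape $\lambda$ on $[n]$ into $\Bscr(V^{\otimes N})$ via their far-eastern readings, and verify that the canonical tableau $Y_0(i,j)=i$ has reading $v_{Y_0}$ of weight $\lambda$ annihilated by every $\e$. Indeed, only rows $i$ and $i{+}1$ contribute to the $i$-signature; since every $\boxed{i{+}1}$ in row $i{+}1$ sits directly below a $\boxed{\,i\,}$ in the strictly longer row $i$, the reading orders each such pair so the $+$ lies strictly to the right of its $-$, making the reduced $i$-signature contain no $-$ at all.

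The key technical step is to show that this image is closed under $\f$ and $\e$, and that the connected component of $v_{Y_0}$ recovers it entirely. For closure one translates the word-level signature rule into a geometric recipe: the box that gets flipped is located by scanning columns in the order dictated by the far-eastern reading, and the strict column condition guarantees that any $\boxed{i{+}1}$ directly below a chosen $\boxed{\,i\,}$ would already have canceled its $+$; a short case analysis on the neighbours of the flipped entry then shows that weak row increase and strict column increase both persist. Connectedness follows by iterating $\e$'s: this strictly decreases the reverse lexicographic order on weight multiplicities and so must terminate at a highest weight element, which by the signature argument above must be $Y_0$, the unique semistandard shape-$\lambda$ tableau of weight $\lambda$.

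Finally, the subset carved out inside $\Bscr(V^{\otimes N})$ is a connected subcrystal with highest weight $\lambda$; combining the existence and uniqueness theorem with Proposition~\ref{irrep} forces it to be isomorphic to $\Bscr_{gl_n}(\lambda)$. The main obstacle is the closure step: the signature rule is phrased in terms of linear words, while semistandardness is a two-dimensional condition on tableaux, so the heart of the proof is a careful bookkeeping of how the far-eastern reading interleaves boxes across different columns and how the strict-column hypothesis pinpoints exactly which entry is flipped.
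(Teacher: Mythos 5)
Your outline is essentially correct, but note that the paper does not prove this proposition at all: it is quoted as a known combinatorial model with a pointer to \cite{hong_kang}, so there is no in-paper argument to compare against. What you have written is precisely the standard proof strategy from that reference (and from Kashiwara--Nakashima): realize $\Bscr(V)$ as a chain, iterate the tensor product rule to get words with the signature rule, check that the canonical tableau is the unique highest weight element among semistandard readings, prove closure of the set of semistandard readings under $\e$ and $\f$, and invoke existence/uniqueness of crystal bases together with Proposition~\ref{irrep}. Two caveats. First, the step you correctly flag as the heart of the matter --- that applying $\f$ to the reading of a semistandard tableau again yields the reading of a semistandard tableau of the same shape --- is exactly the nontrivial content of the theorem, and your ``short case analysis'' is not carried out; as a proof it would need the explicit bracketing argument locating the flipped box and the check against its four neighbours. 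Second, your sign conventions are not internally consistent with the paper's far-eastern reading: in a column the entry $\boxed{\,i\,}$ of row $i$ is read \emph{before} the $\boxed{i\text{+}1}$ directly below it, so with $\boxed{\,i\,}\mapsto +$ and $\boxed{i\text{+}1}\mapsto -$ one must cancel adjacent $+-$ pairs (reduced form $-\cdots-+\cdots+$), whereas you cancel $-+$ and claim the $+$ lies to the right of its $-$; this is a fixable bookkeeping slip, but as written the highest-weight verification for $Y_0$ does not go through. Neither issue is a conceptual gap, and your route is the same one the cited source takes.
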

\begin{corl}\label{hwtA}
    The only highest weight vector in $\Bscr_{gl_{n}}(\lambda)$ is $v_Y$ corresponding to the canonical tableau of shape $\lambda$ defined as $Y(i, j)=i$.
\end{corl}
\begin{proof}
    Since $V_{gl_n}(\lambda)$ is irreducible there is indeed only one highest weight vector in its crystal and we know $v_Y$ is this element because $wt(v_Y)=\lambda$.
\end{proof}

This model is endowed with a $U_q(sp_{2n})$-action coming from the embedding
\[\Bscr_{gl_{2n}}(\lambda) \lhook\joinrel\longrightarrow \pm\Bscr^{\otimes N}\]
where $\Bscr$ is the symplectic crystal of the natural representation, given by
\[\boxed{1}\overset{1}{\longrightarrow}\boxed{2}\overset{2}{\longrightarrow}\cdots \longrightarrow \boxed{n}\overset{n}{\longrightarrow}\boxed{\overline{n}}{\longrightarrow}\cdots \overset{2}{\longrightarrow}\boxed{\overline{2}}\overset{1}{\longrightarrow}\boxed{\overline{1}}\]
and weights
\[wt(\boxed{i})=\epsilon_i\hspace{1cm}wt(\boxed{\overline{\,i\,}})=-\epsilon_i\]

In the tableau, this embedding is just a translation of the fillings from $[2n]$ to $\Acal_n$.

\begin{rmk}
    There is no embedding of $U_q(sp_{2n})$ into $U_q(gl_{2n})$. Therefore, there is a priori no symplectic action on the type $A$ crystal. The model we use for the crystals corresponds to a choice of mapping $V_{gl_{2n}}(\lambda) \hookrightarrow V^{\otimes N}$ and, thus,  implicitly chooses a symplectic action. These choices are not trivial. In fact, Naito--Sagaki's conjecture does not hold for all copies of $V_{gl_{2n}}(\lambda)$ in $V^{\otimes N}$.
\end{rmk}

In this framework, we can reformulate the tensor product rule (\Cref{tprod}).
\begin{lemma}\label{hwtC}
    An element $v=a_1\otimes \cdots \otimes a_N \in \Bscr^{\otimes N}$ is of $U_q(sp_{2n})$-highest weight if and only if
    \[\hspace{1.5cm}\#\{a_j=i \text{ or } \overline{i\!+\!1}\mid j< k\}\ge \# \{a_j=i\!+\!1\text{ or } \overline{\;i\;}\mid j\leq k\}\hspace{0.8cm}\forall i\leq n,\; k\leq N\]
\end{lemma}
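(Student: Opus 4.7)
The plan is to recognize the stated inequalities as exactly the signature rule obtained by iterating the tensor product rule (\Cref{tprod}). Since $v$ is $U_q(sp_{2n})$-highest weight precisely when $\e v = 0$ for every $i\leq n$, equivalently $\varepsilon_i(v) = 0$ for every $i$, it suffices to describe $\varepsilon_i$ on $\Bscr^{\otimes N}$ combinatorially.

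The first step is to read off $\varphi_i$ and $\varepsilon_i$ on the natural crystal $\Bscr$ directly from the displayed crystal graph. For $i<n$, the letters with $\varepsilon_i(a)=1$ are exactly $\boxed{\,i{+}1\,}$ and $\boxed{\,\overline{\,i\,}\,}$, and those with $\varphi_i(a)=1$ are exactly $\boxed{\,i\,}$ and $\boxed{\,\overline{\,i{+}1\,}\,}$; every other letter has $\varphi_i(a)=\varepsilon_i(a)=0$. For $i=n$ only $\boxed{\,n\,}$ and $\boxed{\,\overline{\,n\,}\,}$ contribute, matching the formula once one agrees that the non-existent $\overline{\,n{+}1\,}$ simply does not appear.

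The second step is an induction on $N$ using \Cref{tprod} that establishes the standard signature presentation of $\varepsilon_i$. To each tensor factor $a_j$ I would assign a single sign --- a $+$ when $\varphi_i(a_j)=1$, a $-$ when $\varepsilon_i(a_j)=1$, and nothing otherwise --- producing a signed word $\sigma_i(v)$ read left to right. A direct induction, using the formulas for $\e(b_1\otimes b_2)$ in \Cref{tprod}, shows that greedily cancelling adjacent $+\!-$ pairs reduces $\sigma_i(v)$ to a word of the form $-^{\varepsilon_i(v)}\,+^{\varphi_i(v)}$, so $\e v = 0$ exactly when no $-$ survives the cancellation.

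The last step is to rephrase ``no uncancelled $-$'' as the stated inequality. A $-$ at position $k$ survives the reduction iff the number of $+$'s in positions $j<k$ is strictly smaller than the number of $-$'s in positions $j\leq k$; negating yields the displayed bound at every such $k$. The mild asymmetry between $j<k$ and $j\leq k$ is precisely what forces each $-$ at position $k$ to be matched by a strictly earlier $+$, while at positions carrying a $+$ or nothing the inequality at $k$ follows automatically from the one at $k-1$. The only delicate point is thus verifying the signature rule inductively; the translation into the claimed inequality is then a straightforward bookkeeping argument.
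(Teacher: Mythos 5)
Your argument is correct and lands on the same characterization, but it routes through a stronger intermediate statement than the paper does. You establish the full signature rule --- that the $\pm$-word of $v$ reduces under $+-$ cancellation to $-^{\varepsilon_i(v)}+^{\varphi_i(v)}$ --- and then specialize to $\varepsilon_i(v)=0$. The paper instead inducts directly on the highest-weight property: writing $v=v'\otimes a_N$, the tensor product rule (\Cref{tprod}) gives $\e(v)=0$ iff $\e(v')=0$ and $\varphi_i(v')\ge\varepsilon_i(a_N)$ (the other branch being impossible, as it would force $\varphi_i(v')<0$), and then, crucially, since $v'$ is already highest weight one has $\varphi_i(v')=\langle h_i, wt(v')\rangle$, a signed letter count over $j\le N-1$, so the single inequality $\varphi_i(v')\ge\varepsilon_i(a_N)$ becomes exactly the displayed condition at $k=N$. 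This sidesteps the signature rule entirely: the paper never needs $\varepsilon_i(v)$ for non-highest-weight $v$, whereas your induction must carry the reduced word through every tensor step --- precisely the ``delicate point'' you flag but do not carry out. Your route buys a more general, reusable statement; the paper's buys a shorter verification. One minor imprecision in your last step: the criterion ``the $-$ at position $k$ survives iff $\#\{+\,:\,j<k\}<\#\{-\,:\,j\le k\}$'' is not literally positional (in the word $-\,+\,-$ the count flags both minuses, yet only one survives); what is true, and all your negation actually uses, is that \emph{some} $-$ survives iff the inequality fails at some $k$ carrying a $-$, so the conclusion stands.
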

\begin{proof}
    Write $v=v'\otimes a_N$ and recall $wt(\boxed{i})=\epsilon_i$ while $wt(\boxed{\overline{i}})=-\epsilon_i$.\\
    By the tensor product rule , $\e(v)=0$ if and only if
    $\e(v')=0$ and $\varphi_i(v')\ge \varepsilon(a_N)$ or $\e(a_N)=0$  and $\varphi_i(v')< \varepsilon(a_N)$. However, the last case is absurd, as it implies $\varphi_i(v')< 0$.\\
    Assume, by induction on $N$, that the result holds for $v'$. Then, $\e(v')=0$ is equivalent to the above equations for $k\leq N-1$. Moreover, 
    \begin{align*}
        \varphi_i(v')-\varepsilon_i(a_N)&= \langle h_i, wt(v')\rangle - \varepsilon_i(a_N)\\
        &= \#\{a_j=i\text{ or }\overline{\;i\text{+}1\;}\mid j\leq N-1\}- \# \{a_j=i\text{+}1 \text{ or }\overline{\;i\;} \mid j\leq N-1\}\\
        &\hspace{7.3cm}- \#\{a_N=i\text{+}1\text{ or }\overline{\;i\;}\}
    \end{align*}
    which makes $ \varphi_i(v')\geq \varepsilon_i(a_N)$ equivalent to the remaining equation.
\end{proof}

It is then possible to describe the relevant tableaux in more concrete terms.
\begin{defi}
    We say $Y$ is a \emph{$\mathbf{sp_{2n}}$-highest weight tableaux} if its reading $v_Y$ is a $U_q(sp_{2n})$-highest weight vector.
\end{defi}
\begin{lemma}\label{sptab}
    If a semistandard Young tableau $Y$ on $\Acal_n$ is a $sp_{2n}$-highest weight tableau, then all the entries $\boxed{i}$ with positive weights are in the corresponding $i$-th row. Moreover, every entry with negative weight $\boxed{\overline{\,i\,}}$ appears after the $i$-th row.
\end{lemma}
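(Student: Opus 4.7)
The plan is to prove the two halves simultaneously by induction on $i\in\{1,\ldots,n\}$: I will write $(A_i)$ for the statement ``every $\boxed{i}$ lies in row $i$'' and $(B_i)$ for ``every $\boxed{\overline{\,i\,}}$ lies in a row strictly greater than $i$''. The base case $(A_1)$ is immediate from column strictness, since $\boxed{1}$ is the minimum of $\Acal_n$, and $(B_1)$ will be handled by the same telescoping argument described below.

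For the inductive step, assume $(A_j)$ and $(B_j)$ hold for every $j<i$. To prove $(A_i)$ I argue by contradiction: suppose some $\boxed{i}$ sits in a row $r<i$ and let $c$ be the \emph{rightmost} column in which this happens. The hypotheses $(A_1),\ldots,(A_{i-1})$ together with column strictness force rows $1,\ldots,r-1$ of $c$ to be $\boxed{1},\boxed{2},\ldots,\boxed{r-1}$, and the rightmost choice of $c$ forces every $\boxed{i}$ in columns strictly to the right of $c$ to lie in row $i$. Letting $k$ denote the reading position of the problematic $\boxed{i}$, I then sum the ballot inequalities of Lemma~\ref{hwtC} over the $n-r+1$ indices $s=r,r+1,\ldots,n$. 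Almost every term telescopes and, because $a_k=\boxed{i}$, the sum collapses to
\[
\#\{a_j=\boxed{r}\mid j<k\}\;\ge\;\#\{a_j=\boxed{\overline{\,r\,}}\mid j\leq k\}+1.
\]
The hypothesis $(A_r)$ together with the rightmost choice of $c$ kills the left side (every $\boxed{r}$ must sit in row $r$, but row $r$ in columns $\geq c$ holds entries $\geq Y(r,c)=\boxed{i}>\boxed{r}$), while the right side is at least $1$; this yields $0\geq 1$.

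For $(B_i)$ I run the same telescoping trick, now summing Lemma~\ref{hwtC} over $s=i,i+1,\ldots,n$ after assuming that some $\boxed{\overline{\,i\,}}$ sits in a row $r\leq i$ of a rightmost column $c$. The telescope collapses this time to
\[
\#\{a_j=\boxed{i}\mid j<k\}\;\ge\;\#\{a_j=\boxed{\overline{\,i\,}}\mid j\leq k\},
\]
and I invoke $(A_i)$—which has just been established—to place every $\boxed{i}$ in row $i$. Row $i$ of every column $\geq c$ is either absent or holds an entry $\geq Y(r,c)=\boxed{\overline{\,i\,}}$, which is strictly greater than $\boxed{i}$; so the left side vanishes while the right side counts at least the entry at position $k$.

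The main obstacle I anticipate is the bookkeeping inside the telescoping sum: the asymmetry between $j<k$ and $j\leq k$ in Lemma~\ref{hwtC} must be handled carefully so that the cross-terms really cancel and only the extreme ones survive. One also has to check that the vanishing arguments for the left sides rely only on the parts of the induction already available, namely $(A_j)$ with $j<i$ in the proof of $(A_i)$, and $(A_i)$ (but not $(B_i)$) in the proof of $(B_i)$. Once these two accounting issues are settled, the contradiction $0\geq 1$ falls out cleanly, with no further combinatorial machinery required.
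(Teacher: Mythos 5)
Your proof is correct, and it takes a genuinely different route from the paper's. The paper locates the \emph{first} barred letter $\overline{\,i\,}$ in the far-eastern reading, applies the single inequality of Lemma~\ref{hwtC} for that index $i$ at that position to produce a matching $\boxed{i}$ in an outer corner, deletes the pair $\{i,\overline{\,i\,}\}$, and inducts on the number of boxes; this peeling argument does double duty, since it simultaneously establishes the decomposition of a highest weight tableau into a canonical $\mu$-part plus ordered $\{i,\overline{\,i\,}\}$-pairs that the rest of the paper builds on. You instead induct on the letter and, for each purported violation at reading position $k$, sum a telescoping family of the inequalities of Lemma~\ref{hwtC}. The bookkeeping you flagged does work out: summing over $s=r,\dots,n$ (resp.\ $s=i,\dots,n$), the counts of $t$ and $\overline{\,t\,}$ for intermediate $t$ cancel up to the indicator of $a_k$ itself, which contributes the extra $+1$ precisely in the $(A_i)$ case; and your left-hand sides vanish using only $(A_j)$ for $j<i$ (resp.\ $(A_i)$) together with semistandardness, so the induction is well-founded. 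Two small points to add for completeness: a $\boxed{i}$ cannot sit in a row $r>i$ in the first place (column strictness, since only $i-1$ smaller letters exist), and the ``rightmost column'' choice is never actually needed --- weak increase along row $r$ already rules out any $\boxed{r}$ to the right of column $c$. Your argument avoids any surgery on the tableau and is arguably more self-contained, at the cost of not producing the pair decomposition that the paper extracts from its proof and uses in the sequel.
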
}
\begin{proof}
    Note that before the first entry with negative weight we are restricted to the type A case and by \Cref{hwtA} these entries match the row number. This means that if $Y$ has no negative weights we are done. Otherwise, let $a_k=\overline{\,i\,}$ be the first negative weight in the reading of $Y$.
    By the highest weight condition (and \Cref{hwtC}) we know that
    \[\#\{a_j=i\mid j< k\}\ge \# \{a_j=i\!+\!1\mid j\leq k\}+1\]
    which means there must an $i$ filling in an outer corner. Therefore, by deleting this pair of entries $\{i, \overline{\,i\,}\}$ and sliding the column with $\overline{\,i\,}$ up, if necessary, we obtain a Young tableau $Y'$. Additionally, due to \Cref{hwtC}, we know $Y'$ is also of highest weight, because the fillings $i$ and $\overline{\,i\,}$ cancel each other out in every prefix after this point. By induction on the shape of the tableau, all entries in $Y'$ satisfy the conditions of the lemma and so does the deleted pair, by which are done.
\end{proof}

One way of interpreting this result is every semistandard Young tableaux $Y$ of shape $\lambda$ and weight $\mu$ can be split into a canonical part of shape $\mu$ and a semistandard part of shape $\lambda \setminus \mu$ filled by ordered $\{i, \overline{\,i\,}\}$-pairs.
\[\begin{tikzpicture}[scale=0.45]
    \draw (0,0) -- (0,4) -- (4,4) -- (4,2) -- (2,2) -- (2,1) -- (1,1) -- (1,0) -- cycle;
    \filldraw[fill=blue!75] (0,2) -- (0,4) -- (2,4) -- (2,3) -- (1,3) -- (1,2) -- cycle;
    \node[anchor=center] at (-1.2,2){$Y=$};
    \node[anchor=center] at (0.5,3.4) {\large{$\mu$}};
    \node[anchor=west] at (2.5,0.5) {$\{i, \overline{\,i\,}\}$-pairs};
    \draw[-stealth] (2.5,0.5) to[bend left=40] (1.4,1.4);
\end{tikzpicture}\]
This restricts the question to understanding the skew-tableaux part.

\section{Cascading operation}

Before we can describe the symplectic branching bijection, we need to define an operation which plays a vital part in it and formulate some of its properties.
\begin{defi}
Let $s=(1, \cdots, m)$ be an ordered sequence of fillings in a tableaux $Y$.\\ The operation of \emph{cascading the sequence s} is defined as follows:
\begin{itemize}
    \item Delete $\boxed{1}$\;;
    \item Replace remaining fillings in the sequence via $i\mapsto i-1$.
\end{itemize}
\end{defi}

\begin{rmk}
    The first filling becomes a hole, so the cascaded tableaux are not necessarily Young tableaux or skew-tableaux.
\end{rmk}

\begin{ex}\label{ex:3cascades}
    Consider this tableau with 3 different sequences, which cascade as follows
    \[\begin{ytableau}\none & \none & \none & \none &*(blue!75)1\\
    \none &  *(red!75)1 & *(purple!50)1 & 1 & 2\\
    *(red!75)2 & *(blue!75)2& *(purple!50)2 &3\\
    *(red!75)3&*(blue!75)3 & 4\\
    *(blue!75)4
    \end{ytableau}
    \hspace{15pt}\longmapsto \hspace{15pt}
    \begin{ytableau}\none & \none & \none & \none\\
    \none &  1 & 1 & 1 & 2\\
    2 & *(blue!75)1& 2 & 3\\
    3&*(blue!75)2 & 4\\
    *(blue!75)3
    \end{ytableau}
    \hspace{20pt}
    \begin{ytableau}\none & \none & \none & \none & 1\\
    \none &  \none & 1 & 1 &  2\\
    *(red!75)1 & 2& 2 & 3\\
    *(red!75)2&3 & 4\\
    4
    \end{ytableau}
    \hspace{20pt}
    \begin{ytableau}\none & \none & \none & \none & 1\\
    \none &  1 & \none & 1 & 2\\
    2 & 2& *(purple!50)1 & 3\\
    3&3 & 4\\
    4
    \end{ytableau}\]
\end{ex}

It is necessary to establish some conditions on the sequence to guarantee that the cascade will preserve basic tableau properties.
\begin{lemma}\label{nicecascade}
    Let $s=(1, \cdots, m)$ be a sequence such that:
    \begin{enumerate}[label=(\Roman*)]
        \item $1\in s$ is left-most filling in its row;
        \item There is no $i$ between $i$ and $i+1\in s$ in $v_Y$;
        \item $m\in s$ is the last $m$ filling in $v_Y$;
    \end{enumerate}
    The tableau $Y$ is semistandard if and only if the cascaded tableau $Y'$ fulfills the semistandard condition. Moreover, if the weight of $Y'$ corresponds to a partition, then $Y$ is Littlewood--Richardson if and only if the cascaded tableau $Y'$ fulfills the Littlewood--Richardson condition.
\end{lemma}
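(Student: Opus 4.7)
The cascade alters $Y$ only at the sequence cells: $p_1 = s_1$ becomes a hole, and for $i \ge 2$ the value at $p_i = s_i$ drops from $i$ to $i-1$. My plan is to check, neighbor by neighbor at each affected cell, that the local semistandard inequalities (and then the Littlewood--Richardson prefix inequalities) are preserved. I take the sequence in reading order, which is the natural interpretation that makes ``between'' in (II) unambiguous.

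At the hole $p_1$, condition (I) guarantees no row inequality is lost (there is nothing to the left of $p_1$ in its row), and no cell above $p_1$ can exist in a semistandard $Y$ (it would have value strictly less than $1$). At $p_i$ for $i \ge 2$, the below and right constraints only weaken when the value drops, so they are automatic. The upward neighbor is controlled by (II): if $(r_i - 1, c_i)$ had value $i-1$ in $Y$ and were not $s_{i-1}$, then in the reading it would sit strictly between $s_{i-1}$ and $s_i$ (immediately above $p_i$ in the same column), producing a value-$(i-1)$ in that interval and contradicting (II).

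The leftward neighbor is the subtle point where (II) and (III) genuinely cooperate. Suppose $(r_i, c_i - 1)$ had value $i$ in $Y$; since only $p_i$ carries value $i$ in the sequence, this cell lies outside $s$, and applying (II) to $(s_i, s_{i+1})$ forces $s_{i+1}$ to be read before this offending cell. The strict increase in column $c_i$ then pins $s_{i+1} = (r_i + 1, c_i)$ with value $i + 1$, and row/column semistandardness force $(r_i + 1, c_i - 1) = i + 1$ as well. Iterating from $i$ to $m-1$ builds a ladder $s_j = (r_i + (j - i),\, c_i)$ with $(r_j, c_j - 1) = j$ for every $j \le m$; at $j = m$, the cell $(r_m, c_m - 1) = m$ is read after $s_m$ (its column is $c_i - 1$), contradicting (III). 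The reverse implication $Y'\ \text{SS} \Rightarrow Y\ \text{SS}$ is symmetric, with the same ladder ruling out the now-stricter inequalities in $Y$ (notably a value-$i$ immediately below $s_i$ in $Y'$, which would produce a strict column violation in $Y$).

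For the Littlewood--Richardson statement, under the partition-weight assumption on $Y'$, prefix counts change only through the sequence, via
\[
\operatorname{cnt}_i^{(k)}(Y') - \operatorname{cnt}_i^{(k)}(Y) = -\mathbf{1}\bigl[s_i \le k\bigr] + \mathbf{1}\bigl[s_{i+1} \le k\bigr],
\]
and (II) forbids any further value-$i$ in the reading interval between $s_i$ and $s_{i+1}$. Consequently the difference $\operatorname{cnt}_i - \operatorname{cnt}_{i+1}$ is tracked exactly by the $\pm 1$ contributions of the sequence cells entering the prefix, and (III) covers the boundary case $i = m$ where no compensating $s_{m+1}$ exists. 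The principal obstacle throughout is the leftward ladder, which is the only place where all three hypotheses (I)--(III) are simultaneously needed.
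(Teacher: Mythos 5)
Your forward direction and your treatment of the Littlewood--Richardson condition follow the paper's own proof closely: the same neighbor-by-neighbor local analysis, with (II) controlling the cell above $s_i$ and the downward ladder terminating against (III) handling a value-$i$ to the left of $s_i$; the prefix-count bookkeeping for the LR condition is the paper's argument in compressed form. (For the latter you should still record why the $-1$ incurred on the count of $i$'s, for a prefix ending strictly between $s_i$ and $s_{i+1}$, is harmless: by (II) the count of $i$'s does not change between that endpoint and $s_{i+1}$, so the LR inequality at the box $s_{i+1}$ --- which contributes an extra $i+1$ --- forces the original difference to be at least $1$ there, not merely nonnegative. Your formula alone does not give this.)

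The genuine gap is in the reverse implication, which you dispatch as ``symmetric'' with ``the same ladder''. When $s_i$ is raised from $i-1$ back to $i$, the two constraints that can break are the one with the cell \emph{below} ($D'=i$) and the one with the cell to the \emph{right} ($C'=i-1$); the constraints above and to the left only become easier. The below-neighbor case, the one you single out as notable, is in fact the easy one: a non-sequence value-$i$ directly below $s_i$ is read immediately after $s_i$, hence lies strictly between $s_i$ and $s_{i+1}$, contradicting (II) (or (III) when $i=m$) --- no ladder is needed. The case you never address, $C'=i-1$, is the one that requires a ladder, and that ladder is not the same as in the forward direction: it climbs \emph{upward} ($s_{i-1}$ is pinned to the cell directly above $s_i$, the cell diagonally up-right is then forced to equal $i-2$, and so on), and it terminates not against (III) but against (I), since at the top the forced $1$ in the right-hand column must be $s_1$, which then has $s_2$ to its left in its own row. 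As written, condition (I) plays no role in your reverse direction beyond the hole $s_1$, which is a symptom of the omission: a reader executing your plan would verify the below-neighbor case and stop, leaving the row condition at $s_i$ in $Y$ unproved.
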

\begin{proof}
For semistandardness, consider
 \[\adjustbox{scale=1}{$ \begin{array}{c|c|c}
 & A &  \\ \hline
B & \;i\; & C \\ \hline
 & D &  
\end{array}\;\; {\longrightarrow} \;\; \begin{array}{c|c|c}
 & A' &  \\ \hline
B' & i\text{-}1 & C' \\ \hline
 & D' &  
\end{array}$}\]

Assuming $Y$ is semistandard, we only need to consider the cases where $A=i-1$ or $B=i$. 
Here (II) is the crucial condition. If $A=i-1$ then $A\in s$ and $A'=i-2$. Meanwhile if $B=i$ then we must have $D=i+1 \in s$ and, by semistandardness of $Y$, the filling to its left is also $i+1$. Thus, we can apply the same logic again and again to conclude we have 
\[\adjustbox{scale=0.9}{$
\begin{array}{c|c|c}
 &  &  \\ \hline
 i& i & \;\; \\ \hline
 i\text{+}1&i\text{+}1 &  
 \\\vdots&\vdots\\
 m&m
\end{array}
\;\; {\longrightarrow} \;\;
\begin{array}{c|c|c}
 &  &  \\ \hline
 i& i\text{-} 1& \;\; \\ \hline
 i\text{+}1&i & 
 \\\vdots&\vdots\\
 m&m\text{-}1
\end{array}
$}\]
which contradicts (III).\newpage

Now we prove the remaining implication. Assuming that the cascaded tableau $Y'$ is semistandard, we only need to consider the cases where $D'=i$ or $C'=i-1$. 
This works in an analogous way. If $D=i$ that would contradict (II) so $D'=i$ implies $D=i+1\in s$. Meanwhile, if $C'=i-1$ then $C=i-1\notin s$, so $A=i-1\in s$ and, by semistandardness, we would be able to repeat the process and obtain
\[\adjustbox{scale=0.9}{$
\begin{array}{c|c|c}
  2&1
 \\\vdots&\vdots\\
 i\text{-}1 & i\text{-}2 & \;\; \\ \hline
 i& i\text{-}1 & \;\; \\ \hline
  &  &  \\ 
\end{array}
\;\; {\longrightarrow} \;\;
\begin{array}{c|c|c}
  1&\ast
 \\\vdots&\vdots\\
  i\text{-}2 & i\text{-}2 & \;\; \\ \hline
 i\text{-}1& i\text{-}1 & \;\; \\ \hline
  &  &  \\ 
\end{array}
$}\]
which contradicts (I).\\

For the Littlewood--Richardson condition, let $pr(Y, A)$ be the weight of the prefix of $v_Y$ until a box $A$. We will describe them case-by-case. If $A$ appears before $1\in s$, then $pr(Y', A)=pr(Y, A)$. If $A\in s$, then $pr(Y', A)=pr(Y, A')$, where $A'$ is the box immediatly before $A$. Meanwhile, if $A$ is strictly between $i$ and $i+1 \in s$, then $pr(Y', A)$ only has one less $i$ than $pr(Y,A)$, i.e. $pr(Y', A)_j=pr(Y, A)_j$ for $j\neq i$ and $pr(Y', A)_i=pr(Y, A)_i-1$. However, due to $(II)$,
\begin{align*}
pr(Y', A)_i=pr(Y', \,i\!+\!1\in s)_i\ge pr(Y', \,i\!+\!1\in s)_{i+1}\ge pr(Y', A)_{i+1}
\end{align*}

Lastly, if $A$ appears after $m\in s$ we have can use this same logic due to $(III)$ and the fact that $wt(v_{Y'})=wt(Y')$ corresponds to a partition.

Thus, the changes preserve the correspondence with a partition.
\end{proof}
\vbox{\begin{ex}
    In \Cref{ex:3cascades} Y is a LR tableau and we have, respectively:
    \begin{itemize}
        \item A sequence that does not satisfy $(II)$, resulting in a tableau that is non semistandard and does not satisfy the LR-condition;
        \item A sequence satisfying $(I)-(III)$, resulting in a LR tableau;
        \item A sequence that does not satisfy $(I)$ or $(III)$, resulting in a cascaded tableau that is not a tableau and not semistandard but still satisfies the LR-condition.
    \end{itemize}
\end{ex}}

Additionally, sequences as described above don't cross each other so there is a direct relation between where they end and start, as described in the following lemma.

\begin{lemma}\label{cascades1}
    Let $Y_1$ and $Y_2$ be tableaux with sequences $s_1=(1, \cdots \,, m_1)$ and $s_2=(1, \cdots \,, m_2)$ satisfying $(I)-(III)$ and let $Y_2$ be obtained from $Y_1$ by cascading $s_1$.\\
    If $m_1\leq m_2$, then $k\in s_1$ appear strictly after $k \in s_2$ in the respective readings, for all $k\leq m_1, m_2$.\\
    If $m_1> m_2$, then $k\in s_1$ appear (non-strictly) before $k \in s_2$ in the readings, for all $k\leq m_1, m_2$.
\end{lemma}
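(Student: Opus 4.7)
The plan is to compare cells under the common far-eastern reading order $\prec$ on the underlying diagram and proceed by downward induction on $k$, using (III) for the base case and (II) for the inductive step. Write $A_k$ for $k\in s_1$ (a cell of $Y_1$) and $B_k$ for $k\in s_2$ (a cell of $Y_2$). The cascade turns $A_1$ into a hole and drops the value of each $A_j$ (for $2\le j\le m_1$) by one, so the $k$-cells of $Y_2$ are exactly
\[
(\{k\text{-cells of }Y_1\}\setminus \{A_k\})\cup \{A_{k+1}\},
\]
where the last set is present only when $k+1\le m_1$. In particular $A_k$ and $B_k$ are always distinct cells, since they carry different values in $Y_2$.

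For the first case ($m_1\le m_2$), the goal is $B_k\prec A_k$ strictly, for $k$ running down from $m_1$ to $1$. At the base $k=m_1$ the cascade contributes nothing new to the $m_1$-cells of $Y_2$, so $B_{m_1}$ lies in $\{m_1\text{-cells of }Y_1\}\setminus\{A_{m_1}\}$, and by (III) for $s_1$ every such cell precedes $A_{m_1}$. For the inductive step, since $A_{k+1}$ is a $k$-cell of $Y_2$ lying strictly after $B_{k+1}$ by induction, it cannot serve as $B_k$ (which by (II) for $s_2$ is the last $k$-cell of $Y_2$ preceding $B_{k+1}$); hence $B_k$ is a $k$-cell of $Y_1$ different from $A_k$, and if $B_k\succ A_k$ it would lie strictly between $A_k$ and $A_{k+1}$ in $Y_1$'s reading, contradicting (II) for $s_1$.

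For the second case ($m_1>m_2$), the goal is $A_k\prec B_k$ (stronger than the stated non-strict inequality) for $k$ running down from $m_2$ to $1$. At the base $k=m_2$ the cell $A_{m_2+1}$ is present as a $m_2$-cell of $Y_2$, so by (III) for $s_2$ we get $B_{m_2}\succeq A_{m_2+1}\succ A_{m_2}$. For the step, $A_{k+1}$ is a $k$-cell of $Y_2$ strictly preceding $B_{k+1}$ by induction, hence a legitimate candidate for the last $k$-cell before $B_{k+1}$, which gives $B_k\succeq A_{k+1}\succ A_k$.

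The main difficulty, such as it is, lies in disentangling which tableau each cell belongs to and what value it carries there once the cascade has acted; after making this bookkeeping explicit, both cases reduce to the symmetric observation that $A_{k+1}$ is an eligible candidate for $B_k$ in one regime and an excluded one in the other, and (II) then forces the inequality in the remaining direction.
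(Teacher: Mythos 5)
Your proof is correct and takes essentially the same route as the paper's: a downward induction on $k$ in which condition (III) settles the base case $k=\min\{m_1,m_2\}$ and condition (II) rules out a $k$-cell sitting strictly between consecutive sequence entries in the inductive step. Your explicit bookkeeping of the $k$-cells of $Y_2$ and of the role of $A_{k+1}$ as the one new candidate is a welcome clarification of what the paper leaves implicit, but the underlying argument is the same.
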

\begin{proof}
    Let $m=min\{m_1, m_2\}$ such that, by condition (III) on $s_1$ and $s_2$, we have that $m\in s_1$ appears after $m\in s_2$ exactly when $m_1\leq m_2$.\\
    
    The argument is then an induction on $m-k$.
    
    If $k+1\in s_1$ appears after $k+1\in s_2$ then $k\in s_1$ appears after $k\in s_2$. Otherwise, $k \in s_2$ is between $k+1$ and $k \in s_1$, contradicting condition (II) on $s_1$. Also note that here $k+1\in s_1$ and $k \in s_2$ cannot coincide, since both $k\in s_1$ and $k+1 \in s_2$ are between them.
    
    Meanwhile, if $k+1\in s_1$ appears before $k+1\in s_2$ then $k\in s_1$ appears before $k\in s_2$. Otherwise, $k+1 \in s_1$, which corresponds to $k\in Y_2$, is between $k$ and $k+1 \in s_2$, contradicting condition (II) on $s_2$.
\end{proof}

\begin{ex}
Consider the following consecutive cascades of sequences satisfying $(I)-(III)$
\vspace{-5pt}
\[\adjustbox{scale=0.8,center}{\ctableau
\begin{tikzcd}[row sep = -0.2em, column sep = 1em, ampersand replacement=\&]
    Y_1\& \& Y_2 \& \& Y_3 \& \& Y_4\& \& Y_5\\
    \begin{ytableau}
    \none & \none & \none & *(red!20) 1 \\
    \none &  \none & 1 & *(red!20) 2 \\
    \none &  *(blue!75)  1  & *(red!20) 3 \\
    \none & *(blue!75)  2 & *(red!20)4\\
    2 & *(blue!75)  3 & *(red!20) 5 \\
    *(blue!75) 4 & *(red!20) 6
    \end{ytableau}  
    \arrow[rr, "s_1"'{color=blue}] \&\;\& 
    \begin{ytableau}
    \none & \none & \none & *(red!75) 1 \\
    \none &  \none & 1 & *(red!75) 2 \\
    \none &  \none & *(red!75) 3 \\
    \none & *(blue!20)  1 & *(red!75)4\\
    2 & *(blue!20)  2 & *(red!75) 5 \\
    *(blue!20) 3 & *(red!75) 6
    \end{ytableau}  
    \arrow[rr,"s_2"'{color=red}] \&\;\&
    \begin{ytableau}
    \none & \none & \none & \none \\
    \none &  \none & *(blue!20) 1 & *(red!20) 1 \\
    \none &  \none & *(red!20) 2 \\
    \none & *(purple!60)  1 & *(red!20)3\\
    *(purple!60) 2 & *(blue!20)  2 & *(red!20) 4 \\
     3 &  5
    \end{ytableau} 
     \arrow[rr, "s_3"'{color=purple}] \&\;\&
    \begin{ytableau}
    \none & \none & \none & \none \\
    \none &  \none & *(blue!75) 1 & *(red!20) 1 \\
    \none &  \none & *(red!20) 2 \\
    \none & \none & *(red!20)3\\
    *(purple!20) 1 & *(blue!75)  2 & *(red!20) 4 \\
     3 &  5
    \end{ytableau} 
     \arrow[rr, "s_4"'{color=blue}] \&\;\&
    \begin{ytableau}
    \none & \none & \none & \none \\
    \none &  \none & \none & *(red!75) 1 \\
    \none &  \none & *(red!75) 2 \\
    \none & \none & *(red!75)3\\
    *(purple!20) 1 & *(blue!20)  2 & *(red!75) 4 \\
     3 &  5
    \end{ytableau}
    \\
    \arrow[rrr, start anchor=west, end anchor=east, no head, xshift=-2.5em, decorate, decoration={brace, mirror, amplitude=10pt}] 
    \&\&\&\;
    \arrow[rrrrr, start anchor=west, end anchor=east, no head, xshift=2em, decorate, decoration={brace, mirror, amplitude=10pt}] 
    \&\&\&\&\&\;
    \\
\end{tikzcd}}\]

In particular, while $m_i$ increases the sequences are disjoint, as the light shading highlights.\\
In contrast, note that $s_1$ and $s_3$ have a box in common. Thus, the cascading of these sequences interfere with each other, such that $1\in s_3$ does not correspond to $1$ in $Y_1$ while $2\in s_1$ does not correspond to $1$ in $Y_4$ and $Y_5$.
\end{ex}

\begin{lemma}\label{cascades2}
     Let $Y_i$ be tableaux with sequences $s_i=(1, \cdots \,, m_i)$ satisfying $(I)-(III)$ and let $Y_{i+1}$ be obtained by cascading $s_i$. Assume further that there is a $r$ such that
     \[m_1\leq \cdots \leq m_{r-1} > m_{r}\leq \cdots \leq m_{2r-1}\]
    Then, for fixed $r_0<r$, we have $m_i>m_{i+r}$ for all $i\leq r_0$ if and only if $k\in s_i$ appears strictly before $k \in s_{i+r}$ for all $k\leq m_i, m_{i+r}$ and $i< r_0$.
\end{lemma}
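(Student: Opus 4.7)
The plan is to prove the biconditional by induction on $r_0$, with Lemma \ref{cascades1} serving as the main local tool. The V-shape $m_1 \leq \cdots \leq m_{r-1} > m_r \leq \cdots \leq m_{2r-1}$ gives, via Lemma \ref{cascades1}, a direct comparison of consecutive sequences $s_j$ and $s_{j+1}$: strictly after (in the reading) on each of the two increasing segments, and non-strictly before across the V-shape drop $m_{r-1} > m_r$. Comparing $s_i$ with $s_{i+r}$ is then a matter of chaining these pairwise relations together while using the extra hypothesis $m_i > m_{i+r}$ to ensure the strict inequality survives the chaining.

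For the forward direction, assuming $m_i > m_{i+r}$ for all $i \leq r_0$, I would track the position of $k \in s_i$ through the intermediate cascades $s_{i+1}, \ldots, s_{i+r-1}$ down to its relative position against $k \in s_{i+r}$. By the inductive hypothesis the strict-before relation is already established at the previous index, and applying Lemma \ref{cascades1} at the newly exposed pair of consecutive cascades, combined with the fresh input $m_{r_0} > m_{r_0 + r}$, extends it to the next index. For the converse, I would argue by contraposition: if some $m_{i_0} \leq m_{i_0 + r}$ fails for $i_0 \leq r_0$, Lemma \ref{cascades1} at that index shifts positions in the opposite direction from what the strict-before hypothesis requires, producing a contradiction through the same chaining.

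The main obstacle is the careful position-tracking across the multiple intermediate tableaux $Y_i, Y_{i+1}, \ldots, Y_{i+r}$: Lemma \ref{cascades1} gives only pairwise information between consecutive cascades, but our goal spans $r$ steps. The V-shape drop at index $r$, where Lemma \ref{cascades1} toggles between its strictly-after and non-strictly-before conclusions, is the delicate point; verifying that the chain still yields a \emph{strict} before relation rather than merely non-strict requires using both the quantitative hypothesis $m_i > m_{i+r}$ and condition (II) on the admissible sequences, which prevents intermediate collisions between the fillings of $s_i$ and those of $s_{i+r}$.
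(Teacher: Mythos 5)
There is a genuine gap: the chaining strategy cannot produce the conclusion. Lemma~\ref{cascades1} applied to consecutive pairs along the V-shape gives inequalities that point in \emph{opposite} directions. On the increasing segments the position of $k\in s_j$ is strictly \emph{later} than that of $k\in s_{j+1}$, while across the drop $m_{r-1}>m_r$ it is (non-strictly) \emph{earlier}. Concretely, writing $p_j$ for the position of $k\in s_j$ in the reading, the chain from $s_i$ to $s_{i+r}$ yields $p_i > p_{i+1} > \cdots > p_{r-1}$, then $p_{r-1}\leq p_r$, then $p_r > \cdots > p_{i+r}$; transitivity gives no relation whatsoever between $p_i$ and $p_{i+r}$. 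The same obstruction reappears in your proposed induction on the index: from $p_i < p_{i-1} < p_{i-1+r}$ and $p_{i+r} < p_{i-1+r}$ you get two quantities both bounded above by $p_{i-1+r}$, which does not compare them. The hypothesis $m_i>m_{i+r}$ is not merely needed to upgrade non-strict to strict --- it must enter the argument in an essentially different way, and no amount of chaining consecutive applications of Lemma~\ref{cascades1} supplies the missing comparison.

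A second problem is your claim that condition (II) ``prevents intermediate collisions between the fillings of $s_i$ and those of $s_{i+r}$.'' It does not: the paper's own example shows $s_1$ and $s_3$ sharing a box, and the actual proof must explicitly handle the case where a box of $s_{i+r}$ carries a filling that was decremented by some intermediate cascade. What \emph{is} true, and what the paper uses, is that $\{s_1,\dots,s_{r-1}\}$ and $\{s_r,\dots,s_{2r-1}\}$ are each internally box-disjoint, so such interference happens at most once per box. The paper's proof then does not chain at all: it reruns the downward induction on $k$ from the proof of Lemma~\ref{cascades1} directly on the non-consecutive pair $(s_i,s_{i+r})$, establishing the base case at $k=\min(m_i,m_{i+r})$ by tracking whether that filling of one sequence survives the intermediate cascades or is hit exactly once, and in the inductive step using condition (II) together with an auxiliary induction on $i$ (and Lemma~\ref{cascades1}) to exclude the bad configuration where $k+1\in s_i$ sits between $k$ and $k+1\in s_{i+r}$. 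To repair your argument you would need to abandon the transitivity chain and adopt this direct comparison.
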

\begin{proof}
This argument follows the previous proof, however now we must consider that the cascadings might interfere with $s_i$ and $s_{i+r}$. Here it is important to note that $\{s_1, \cdots, s_{r-1}\}$ and well as $\{s_{r}, \cdots, s_{2r-1}\}$ are sequences with no boxes in common, thus this interference happens at most once per box.

$(\Leftarrow)$ We prove the contrapositive. Let $m_i\leq m_{i+r}$, then $m_i\in s_{i+r}$ comes from a $m_i\in Y_i$ or is $m_i+1\in s_{j}$ for some $i\leq j<r$. Either way, $m_i\in s_i$ appears strictly after $m_i\in s_{i+r}$.

$(\Rightarrow)$ Let $m_i>m_{i+r}$. Similarly to before, we have that $m_{i+r}\in s_{i}$ is sent to a $m_{i+r}\in Y_i$ or we have $m_{i+r}-1\in s_{j}$ for some $r\leq j<i+r$ --- either way it strictly precedes $m_{i+r}\in s_{i+r}$. Now assume $k+1\in s_i$ appears strictly before $k+1\in s_{r+i}$ and $k \in s_{i}$ appears after $k\in s_{i+r}$. This means that $k+1\in s_{i}$ is in between $k$ and $k+1\in s_{i+r}$. Due to $(II)$, this box cannot be filled by a $k$ in $Y_{i+r}$, so this must be the same box as $k\in s_{j}$ for some $j<i$. However, by induction on $i$, we have that $k\in s_j$ that appears before $k\in s_{j+r}$ which, by \Cref{cascades1}, appears before $k\in s_{i+r}$. Hence, we have a contradiction, which allows us to conclude that we have the desired order for all $k\leq m_{i+r}$.
\end{proof}

\section{Symplectic branching bijection}

We first focus on the case where $\ell(\lambda)\leq n$. Here, every Littlewood--Richardson tableau satisfies the $n$-symplectic condition on Sundaram's branching rule. Moreover, in this case, the highest weight condition is independent of $n$ (\Cref{hwtC} and \Cref{sptab}), which motivates the following definition.

\begin{defi}
    We say $Y$ is a \emph{$\mathbf{sp}$-highest weight tableau} if its reading $v_Y$ is a $U_q(sp_{2n})$-highest weight vector for $n$ is arbitrarily big. 
\end{defi}

We will construct a bijection $F$ between highest weight and Littlewood--Richardson tableaux inductively on $\Delta=|\lambda|-|\mu|$.
\[\begin{tikzcd}[ampersand replacement=\&]
        \begin{Bmatrix}
        \text{$sp$-highest weight}\\
        \text{semistandard tableaux}\\ 
        \text{of shape $\lambda$}\\ 
        \text{and weight $\mu$}
        \end{Bmatrix}
    \arrow[leftrightarrow, dotted, "F"]{rr}
    \arrow[hookrightarrow, "\iota_{sp}"]{dd}
    \&\&
        \begin{Bmatrix}
        \text{LR tableaux}\\ 
        \text{of shape $\lambda \setminus \mu$ and}\\ 
        \text{weight of form $(2\delta)'$}
        \end{Bmatrix}
    \arrow[hookrightarrow, "\iota_{LR}"]{dd}
    \\
    \\
        \begin{Bmatrix}
        \text{$sp$-highest weight}\\
        \text{semistandard tableaux}\\ 
        \text{of shape $<\lambda$}\\ 
        \text{and weight $>\mu$}
        \end{Bmatrix}
    \arrow[leftrightarrow, "F"]{rr}
    \&\&
        \begin{Bmatrix}
        \text{LR tableaux}\\ 
        \text{of shape $<\lambda \setminus \mu$ and}\\ 
        \text{weight of form $(2\delta)'$}
        \end{Bmatrix}
\end{tikzcd}\]

%\subsection{$sp$-inclusion}
\vbox{\begin{defi}
    The inclusion $\iota_{sp}$ is obtained through the following process:
\begin{itemize}
    \item Delete last entry with negative weight;
    \item Slide last row to the left if needed to obtain a Young tableau, i.e. if $\lambda_{\ell(\lambda)}>1$.
\end{itemize}
\end{defi}}

\begin{ex} Consider the following tableau and $\iota_{sp}$ action
\[\ctableau
\begin{tikzcd}[row sep = -0.2em, ampersand replacement=\&]
Y\&\&\hspace{1em}\iota_{sp}(Y)\\
    \adjustbox{scale=0.8}{\begin{ytableau}
    1 & 1 & 1 & 1 \\
    2 & 2 & 2 & 2 \\
    3 & 3 & 3 \\
    4 & 4 & {\overline{3}} \\
    {\overline{4}} & {\overline{2}} & {\overline{2}} \\
    {\overline{3}} & {\overline{1}} \\
    \end{ytableau}}
\arrow[rr]
\;\&\&\;
    \adjustbox{scale=0.8}{\begin{ytableau}
    1 & 1 & 1 & 1 \\
    2 & 2 & 2 & 2 \\
    3 & 3 & 3 \\
    4 & 4 & {\overline{3}} \\
    {\overline{4}} & {\overline{2}} & {\overline{2}} \\
    {\overline{1}} \\
    \end{ytableau}}
\end{tikzcd}\]
which, when restricted to $\lambda\setminus\mu$, gives us the deletion of a $\{i,\overline{i}\}$-pair, as highlighted below
\[
\begin{tikzcd}[row sep = 0.2em, ampersand replacement=\&]
    \adjustbox{scale=0.8}{\begin{ytableau}
    \none & \none & \none & 1 \\
    \none &  \none & 2 & 2 \\
    \none & *(blue!75)3 & 3 \\
    \none & 4 & {\overline{3}} \\
    {\overline{4}} & {\overline{2}} & {\overline{2}} \\
    *(blue!75){\overline{3}} & {\overline{1}} \\
    \end{ytableau}}
\arrow[rr]
\;\&\&\;
    \adjustbox{scale=0.8}{\begin{ytableau}
    \none & \none & \none & 1 \\
    \none &  \none & 2 & 2 \\
    \none & \none & 3 \\
    \none & 4 & {\overline{3}} \\
    {\overline{4}} & {\overline{2}} & {\overline{2}} \\
    {\overline{1}} \\
    \end{ytableau}}
\end{tikzcd}\]
\end{ex}
\newpage

\begin{lemma}\label{iC}
    The map $\iota_{sp}$ is well-defined and injective for fixed $(\lambda, \mu)$.
\end{lemma}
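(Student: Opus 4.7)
The plan is to verify the two claimed properties of $\iota_{sp}$ in turn.

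For well-definedness, I first need to locate the deleted cell. Since $\lambda \neq \mu$, the decomposition following Lemma~\ref{sptab} guarantees that $Y$ contains at least one negative entry, so the ``delete last entry with negative weight'' step is meaningful. Within each column, semistandardness together with the order on $\Acal_n$ stacks all negatives below all positives, so the last-read negative $\boxed{\overline{\,i\,}}$ sits at the bottom of the leftmost column $c$ that contains any negative. Lemma~\ref{sptab} applied to the columns strictly to the left of $c$ forces them to be entirely canonical, and this structural rigidity is exactly what makes the prescribed delete-plus-slide operation produce a valid Young tableau of smaller shape. Semistandardness is then immediate away from the modified cells, and at those cells it follows because $\boxed{\overline{\,i\,}}$ dominated every entry sitting immediately above or to its left, while the slide only rearranges entries within a single row.

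Next I would check that $Y' = \iota_{sp}(Y)$ remains $sp$-highest weight by feeding its reading into Lemma~\ref{hwtC}. The reading of $Y'$ differs from that of $Y$ by removing the terminal $\boxed{\overline{\,i\,}}$ contribution, together with a harmless reordering of a short suffix when the slide is performed. The type-C prefix inequalities at index $i$ survive because the deleted letter only appeared on the right-hand side of those inequalities, while the inequalities at indices $j \neq i$ are unaffected. Since exactly one $\boxed{\overline{\,i\,}}$ was removed, the new weight equals $\mu' = \mu + \epsilon_i$, which is strictly larger than $\mu$ and automatically a partition because $Y'$ is $sp$-highest weight.

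Injectivity follows from a reconstruction argument. Given $Y' = \iota_{sp}(Y)$ and the fixed pair $(\lambda,\mu)$, the unique index $i$ for which $\mu'_i > \mu_i$ identifies which letter was removed, and the shape difference $\lambda \setminus \lambda'$ together with the knowledge of whether the slide was performed (encoded by the condition $\lambda_{\ell(\lambda)} > 1$) pinpoints the exact cell that was deleted. Reinserting $\boxed{\overline{\,i\,}}$ into that cell and reversing the slide uniquely recovers $Y$, so distinct inputs produce distinct images.

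I expect the main obstacle to be the shape-preservation step in the first paragraph: proving that the last-read negative always sits at a cell admitting the prescribed delete-and-slide. This is the combinatorial heart of the statement, and it is precisely here that the rigidity imposed by Lemma~\ref{sptab} on the positions of positive entries must be used in full force.
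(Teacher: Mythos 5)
Your strategy coincides with the paper's --- delete the last negative entry, check semistandardness and the highest-weight condition of Lemma~\ref{hwtC}, and recover $Y$ from the weight and shape differences for injectivity --- and the injectivity paragraph matches the paper's. But the highest-weight step contains an error. The letter $\overline{\,i\,}$ does not only appear on the right-hand side of the relevant inequalities: in Lemma~\ref{hwtC} it is counted on the \emph{left}-hand side of the inequality at index $i-1$ (the set $\{a_j=i-1 \text{ or } \overline{\,i\,}\}$). So removing it, or moving it later in the reading, strictly weakens that inequality for every prefix ending between its old and new positions, and your claim that ``the inequalities at indices $j\neq i$ are unaffected'' is false. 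The slide is likewise not a ``harmless reordering of a short suffix'': each entry of the last row moves one full column later in the far-eastern reading, so an entire column's worth of prefixes is altered. This is exactly the point the paper's proof addresses: it uses Lemma~\ref{sptab} to see that every $i+1$ in the affected range of the reading is directly preceded by an $i$, and semistandardness to see that no $\overline{\,i\,}$ or $\overline{\,i+1\,}$ lies between the old and new positions of a sliding $\overline{\,i\,}$, and concludes that the inequalities survive. Without some version of this argument your proof of highest-weight preservation does not go through.

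Second, you correctly locate the deleted cell at the bottom of the leftmost column containing a negative entry and observe that the columns to its left are canonical, but you then only assert that ``this structural rigidity is exactly what makes the prescribed delete-plus-slide operation produce a valid Young tableau,'' and you yourself flag this as the main unproven obstacle. As written this is a reduction to an open combinatorial claim rather than a proof of well-definedness: you would still need to show that the deleted cell is either a removable corner or sits in the last row so that the prescribed slide restores partition shape. (The paper dispatches this with ``the image is clearly semistandard,'' so it is terse there too, but your proposal cannot claim the lemma while explicitly deferring this step.) In summary, the proposal follows the paper's route but leaves both halves of well-definedness --- the shape claim and the highest-weight claim --- incomplete, the latter with an incorrect intermediate assertion.
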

\begin{proof}
    The image is clearly semistandard. To see that it is of highest weight, note that the deletion doesn't change any prefixes and the sliding changes them by at most one $\overline{i}$ from the last row. However, using the description of $sp$-highest weight tableaux (\Cref{sptab}) we can see that this change is not a problem, since every $i+1$ is directly preceded by $i$ and since, by semistandardness, there is no $\overline{i\,}$ or $\overline{i\text{+}1}$ filling between the positions of a sliding filling $\overline{i\,}$.
    Moreover, we can identify the deleted filling from the weight of $\iota_{sp}(Y)$ and $\mu$, so we can recover $Y$.
\end{proof}

\begin{defi}
    The inclusion $\iota_{LR}$ is obtained through the following process
\begin{itemize}
    \item Locate the last ordered sequence $s=(1, 2, \cdots, m)$ of fillings in $Y$, according to the anti-lexicographic order on the reading (that is, such that $m$ is the last filling and there are no $i$ between $i$ and $i+1 \in s$);
    \item Cascade $s$;
    \item Delete the last filling;
    \item Slide last row to the left if needed to obtain a Young tableau, i.e. if $\lambda_{\ell(\lambda)}>1$.
\end{itemize}
\end{defi}

\begin{ex} Consider the following tableau and $\iota_{LR}$ action\\
\[\ytableausetup{smalltableaux, onlyboxsize=1.1em}
\begin{tikzcd}[row sep = 0.2em, ampersand replacement=\&]
Y\&\&\hspace{1em}\iota_{LR}(Y)\\
    \adjustbox{scale=0.8}{\begin{ytableau}
    \none & \none & \none & 1 \\
    \none &  \none & 1 & 2 \\
    \none &  *(blue!75) 1 & 3 \\
    \none & *(blue!75)  2 &4\\
    2 & *(blue!75)  3 & 5 \\
    *(blue!75) 4 & 6 \\
    \end{ytableau}}
\arrow[r]
\;\& \;
    \adjustbox{scale=0.8}{\begin{ytableau}
    \none & \none & \none & 1 \\
    \none &  \none & 1 & 2 \\
    \none &  \none & 3 \\
    \none & *(blue!75)  1 &4\\
    2 & *(blue!75)  2 & 5 \\
    *(blue!75)  3 & 6 \\
    \end{ytableau}} 
\arrow[r]
\;\&\;
    \adjustbox{scale=0.8}{\begin{ytableau}
    \none & \none & \none & 1 \\
    \none &  \none & 1 & 2 \\
    \none &  \none & 3 \\
    \none &  1 &4\\
    2 &  2 & 5 \\
    6 \\
    \end{ytableau}}
\end{tikzcd}\]
where the sequence $s$ is highlighted.
\end{ex}

\begin{lemma}\label{iA}
    The map $\iota_{LR}$ is well-defined and injective for fixed $(\lambda, \mu)$.
\end{lemma}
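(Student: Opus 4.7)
The plan is to verify well-definedness step-by-step and then exhibit an explicit inverse for injectivity.

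For well-definedness, I will first argue that the sequence $s$ exists and that its maximum $m$ is even. Since $Y$ has weight $(2\delta)'$, the pair-equalities $\nu_{2k-1}=\nu_{2k}$ hold, and the last letter of the reading has value $m$ satisfying $\nu_m > \nu_{m+1}$ (otherwise removing it would break the lattice condition on the prefix); this strict drop can only occur between an even and the following odd index, so $m$ is forced to be even. The predecessors $1, 2, \ldots, m-1$ then exist by the LR property of the reading. By construction $s$ satisfies conditions (II) and (III) of Lemma~\ref{nicecascade}, so its forward direction applies to show that the cascaded tableau is semistandard and Littlewood--Richardson (its weight, differing from that of $Y$ only by $-e_m$, is still a partition).

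Next, deleting the last filling---which the cascade has just relabeled from $m$ to $m-1$ at the same reading position---preserves both semistandardness and the lattice condition, since removing the last letter of a lattice word yields a lattice word. The net weight change $\nu_m, \nu_{m-1} \mapsto \nu_m-1, \nu_{m-1}-1$ preserves the $(2\delta)'$ form because $m$ is even, so the pair $(\nu_{m-1}, \nu_m)$ was equal and remains equal. The final left-slide of the last row is a purely positional adjustment, so the output is indeed an LR tableau of the required weight type and of smaller shape.

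For injectivity, I will describe the inverse. Given $Y' = \iota_{LR}(Y)$ together with the fixed $(\lambda, \mu)$: the deleted bottom-left cell sits at $(\ell, \mu_\ell+1)$ with $\ell = \ell(\lambda)$, and the row $i$ of the cascaded $1 \in s$ is the unique row where the $\mu$-part of $Y'$'s shape has grown by one box relative to $\mu$. Starting from the restored bottom-left position, I walk backwards through the reading of $Y'$, picking up the last occurrence of each successively smaller value; the correct $m$ is then characterized as that for which this walk terminates at $(i, \mu_i+1)$ with value $1$. Reversing the cascade (incrementing each value along the chain by $1$, placing $1$ at the hole, and labelling the restored cell with $m$) recovers $Y$ uniquely.

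The main obstacle I anticipate is confirming that the backward chain in the inverse procedure is rigidly determined by its endpoints, so that $m$ and the intermediate positions are unambiguous; this should follow from the disjointness and ordering properties of cascade sequences established in Lemmas~\ref{cascades1} and~\ref{cascades2}, which together guarantee that the chain of ``last predecessors'' between two fixed cells admits no alternative.
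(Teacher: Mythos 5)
Your well-definedness argument is sound and essentially parallels the paper's; your observation that $m$ must be even (so that the weight $\nu-\epsilon_m-\epsilon_{m-1}$ remains of the form $(2\delta)'$) is a detail the paper leaves implicit, and is a nice addition. The only things you gloss over there are condition (I) of \Cref{nicecascade} --- which is needed so that deleting $1\in s$ produces a hole at $(i,\mu_i+1)$ and hence a genuine skew shape $(\lambda-\epsilon_{\ell(\lambda)})\setminus(\mu+\epsilon_i)$, a fact your own inverse relies on --- and the fact that the left-slide moves boxes between columns and therefore does change the far-eastern reading, so it is not literally ``purely positional''.

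The genuine gap is in your inverse. You recover the cascaded sequence by walking \emph{backwards} from the restored bottom cell, taking the last occurrence of each successively smaller value. This fails: condition (II) forbids an $i$ between $i\in s$ and $(i+1)\in s$, but it does \emph{not} forbid an $i$ between $(i+1)\in s$ and $(i+2)\in s$, and any such $i$ survives the cascade unchanged and then sits, in $\iota_{LR}(Y)$, between the relabelled $(i+1)\in s$ (now carrying the value $i$) and the later end of your chain, so your backward walk grabs it instead. The paper's own $\iota_{LR}$ example exhibits this: the $2$ at $(5,1)$ lies between $3\in s=(5,2)$ and $4\in s=(6,1)$ in the reading, and in $\iota_{LR}(Y)$ it is the last $2$ before the restored bottom cell, so your walk selects $(5,1)$ rather than the correct $(5,2)$, then lands on the $1$ at $(4,2)$ and never reaches the hole at $(3,2)$. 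What (II) does give you after relabelling is that there is no $i$ between the new $i-1$ (old $i\in s$) and the new $i$ (old $(i+1)\in s$); that is, the relabelled sequence is the lexicographically \emph{first} chain $1,2,\dots$ read \emph{forwards} from the hole. That forward greedy walk is what the paper uses; it also determines $m$ (as two more than the value at which the forward chain terminates) and needs no appeal to \Cref{cascades1} or \Cref{cascades2}, which concern the interaction of sequences across successive cascades and are not relevant to recovering a single sequence.
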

\begin{proof}
First note that a sequence as desired always exists because by the highest weight condition there is always an $i-1$ before $i$ and note that it satisfies the conditions $(I)-(III)$ of \Cref{nicecascade}.

Hence, the image exists and that the cascaded tableau is Littlewood–Richardson. Meanwhile, we can see that the deletion and sliding will also preserve these conditions with a simplified version of the argument we used in \Cref{iC}.

We can also check that the image has weight and shape of the desired form
\[wt(\iota_{LR}(Y))=wt(Y)-\epsilon_m-\epsilon_{m-1}
\hspace{1cm}sh(\iota_{LR}(Y))=(\lambda-\epsilon_{\ell(\lambda)})\setminus (\mu+ \epsilon_{i})\]
where $i$ is the row of $1\in s$.

We are then left to show injectivity. Note that from the shape of $\iota_{LR}(Y)$ and $\lambda \setminus \mu$ we can tell which box was deleted. And, since the original sequence didn't have any $i$ between $i$ and $i+1$, its image won't have any  $i$ between $i-1$ and $i$. Therefore, by choosing the first sequence $1, 2, \cdots, k$ after the deleted box, according to the lexicographic order on the reading, we recover the remaining part of the original sequence. Hence, $Y$ can be reconstructed by sliding, adding $1$ and $m$ back and mapping $i\mapsto i+1$.
\end{proof}

%\subsection{Correspondence}
We can then establish the connection between these inclusions and use it to construct our bijection $F$.

\begin{defi}
We define $F$ piecewise as
\[F_{\lambda, \mu}:
        \begin{Bmatrix}
        \text{ LR tableaux}\\ 
        \text{of shape $\lambda \setminus \mu$ and}\\ 
        \text{weight of form $(2\delta)'$}
        \end{Bmatrix}
    \longrightarrow 
        \begin{Bmatrix}
        \text{$sp$-highest weight}\\
        \text{semistandard tableaux}\\ 
        \text{of shape $\lambda$}\\ 
        \text{and weight $\mu$}
        \end{Bmatrix}
        \]
where
\[F_{\lambda, \lambda}:\hspace{0.2cm} 
        \begin{matrix}
        \text{empty tableau}\\
        Y(\emptyset)=\emptyset
        \end{matrix}
    \hspace{0.2cm}\longmapsto\hspace{0.2cm}
        \begin{matrix}
        \text{canonical tableau}\\
        Y(i,j)=i
        \end{matrix}\]
and, recursively,
\[F_{\lambda, \mu}(Y)= \iota_{sp}^{-1} \circ F\circ \iota_{LR}(Y)\]
\end{defi}

\begin{ex}
Consider the tableaux from the previous examples and their correspondence, illustrated below with the canonical $\mu$ fillings omitted\\

\adjustbox{scale=0.8,center}{
    \begin{tikzcd}[ampersand replacement=\&]
    \ctableau
    \begin{ytableau}
    \none & \none & \none & 1 \\
    \none &  \none & 1 & 2 \\
    \none &  *(blue!75) 1 & 3 \\
    \none & *(blue!75)  2 &4\\
    2 & *(blue!75)  3 & 5 \\
    *(blue!75) 4 & 6 \\
    \end{ytableau} 
    \arrow[r]\&
    \begin{ytableau}
    \none & \none & \none & *(blue!75) 1 \\
    \none &  \none & 1 & *(blue!75) 2 \\
    \none &  \none & *(blue!75) 3 \\
    \none & 1 & *(blue!75) 4\\
    2 & 2 & *(blue!75) 5 \\
    *(blue!75) 6 \\
    \end{ytableau}
    \arrow[r]\&
    \begin{ytableau}
    \none & \none & \none & \none\\
    \none &  \none & 1 & 1 \\
    \none &  \none & 2 \\
    \none & *(blue!75) 1 & 3\\
    *(blue!75) 2 & 2 & 4 \\
    \end{ytableau}
    \arrow[r]\&
    \begin{ytableau}
    \none & \none & \none & \none\\
    \none &  \none & *(blue!75) 1 & 1 \\
    \none &  \none & 2 \\
    \none & \none & 3\\
     *(blue!75) 2 & 4 \\
    \end{ytableau} 
    \arrow[r]\&
    \begin{ytableau}
    \none & \none & \none & \none\\
    \none &  \none & \none & *(blue!75) 1 \\
    \none &  \none & *(blue!75) 2 \\
    \none & \none & *(blue!75) 3\\
     *(blue!75) 4 \\
    \end{ytableau} 
    \arrow[r]\&
    \begin{ytableau}
    \none & \none \\
    \none &  \none \\
    \none &  1 &  \none\\
    \none &  2 &  \none\\
    \end{ytableau}
    \arrow[d, leftrightarrow, yshift=-15]\\
    \begin{ytableau}
    \none & \none & \none & 1 \\
    \none &  \none & 2 & 2 \\
    \none & *(blue!75) 3 & 3 \\
    \none & 4 & {\overline{3}} \\
    {\overline{4}} & {\overline{2}} & {\overline{2}} \\
    *(blue!75) {\overline{3}} & {\overline{1}} \\
    \end{ytableau} 
    \arrow[r]\&
    \begin{ytableau}
    \none & \none & \none & *(blue!75) 1 \\
    \none &  \none & 2 & 2 \\
    \none &  \none & 3 \\
    \none & 4 & {\overline{3}} \\
    {\overline{4}} & {\overline{2}} & {\overline{2}} \\
    *(blue!75) {\overline{1}} \\
    \end{ytableau}
    \arrow[r]\&
    \begin{ytableau}
    \none & \none & \none & \none\\
    \none &  \none & 2 & 2 \\
    \none &  \none & 3 \\
    \none & *(blue!75) 4 & {\overline{3}} \\
    *(blue!75) {\overline{4}} & {\overline{2}} & {\overline{2}} \\
    \end{ytableau}
    \arrow[r]\&
    \begin{ytableau}
    \none & \none & \none & \none\\
    \none &  \none & *(blue!75) 2 & 2 \\
    \none &  \none & 3 \\
    \none & \none & {\overline{3}} \\
     *(blue!75) {\overline{2}} & {\overline{2}} \\
    \end{ytableau} 
    \arrow[r]\&
    \begin{ytableau}
    \none & \none & \none & \none\\
    \none &  \none & \none & *(blue!75) 2 \\
    \none &  \none & 3 \\
    \none & \none & {\overline{3}}\\
     *(blue!75) {\overline{2}} \\
    \end{ytableau} 
    \arrow[r]\&
    \begin{ytableau}
    \none & \none \\
    \none &  \none \\
    \none &  3 &  \none\\
    \none &  {\overline{3}} &  \none\\
    \end{ytableau}\\
    \end{tikzcd}}
%We have highlighted the sequence altered by $\iota_{LR}$ and the changes to $\lambda\setminus \mu$ via $\iota_{sp}$. Note that the entries "deleted" by $\iota_{sp}$ correspond to the first and last entry in the sequences altered by $\iota_{LR}$. Therefore,
Note that the negative filling deleted by $\iota_{sp}$ corresponds the row number of the $1$ in the sequence altered by $\iota_{LR}$.
\end{ex}

We are now ready to prove the core of our main result.
\begin{prop}\label{Fgood}
    The map $F$ is a well-defined bijection.
\end{prop}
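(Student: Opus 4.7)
The plan is to argue by strong induction on $\Delta = |\lambda| - |\mu|$, establishing well-definedness and bijectivity together. The base case $\Delta = 0$ is immediate: $\lambda = \mu$ forces both sides to be singletons (the empty skew tableau on one side, and the canonical tableau of shape $\lambda$ --- which is $sp$-highest weight by \Cref{sptab} --- on the other), and $F_{\lambda,\lambda}$ is the obvious bijection between them.

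For the inductive step, let $Y$ be an LR tableau of shape $\lambda \setminus \mu$ with weight $(2\delta)'$. By \Cref{iA}, $\iota_{LR}(Y)$ is a well-defined LR tableau of shape $(\lambda - \epsilon_{\ell(\lambda)}) \setminus (\mu + \epsilon_i)$, where $i$ is the row of $1 \in s$. I first check that its weight $(2\delta)' - \epsilon_{m-1} - \epsilon_m$ is still of the form $(2\delta')'$: the LR condition applied to the prefix of $v_Y$ preceding the last box forces $c_m > c_{m+1}$, and since $(2\delta)'$ satisfies $c_{2k-1} = c_{2k}$, this forces $m$ to be even, so subtracting $\epsilon_{m-1} + \epsilon_m$ preserves the pairing. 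Invoking the inductive hypothesis then produces $Z := F(\iota_{LR}(Y))$, an $sp$-highest weight semistandard tableau of shape $\lambda - \epsilon_{\ell(\lambda)}$ and weight $\mu + \epsilon_i$.

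The core of the argument is showing that $Z$ lies in the image of $\iota_{sp}$, so that $\iota_{sp}^{-1}(Z) = F(Y)$ is a valid semistandard $sp$-highest weight tableau of shape $\lambda$ and weight $\mu$. Unwinding the recursion presents $Z$ as built from a smaller canonical tableau by iteratively appending entries $\overline{i_1}, \overline{i_2}, \ldots$ at the bottom, where $i_k$ is the row of the first element of the sequence $s_k$ cascaded at the $k$-th recursive application of $\iota_{LR}$. The two checks needed --- semistandardness upon appending $\overline{i}$ at position $(\ell(\lambda), \lambda_{\ell(\lambda)})$, and preservation of the condition in \Cref{hwtC} --- translate into order comparisons between the positions and initial rows of the successive sequences $s_k$, and these comparisons are exactly what \Cref{cascades1} and \Cref{cascades2} are designed to supply.

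Bijectivity then follows symmetrically: I define a candidate inverse $G(Z) = \iota_{LR}^{-1} \circ G \circ \iota_{sp}(Z)$ by the same induction, using \Cref{iC} to obtain $\iota_{sp}(Z)$ together with a dual compatibility check ensuring that $G(\iota_{sp}(Z))$ lies in the image of $\iota_{LR}$. Injectivity of $\iota_{LR}$ and $\iota_{sp}$ (\Cref{iA} and \Cref{iC}) then promotes $F$ and $G$ to mutual inverses. The principal obstacle throughout is the compatibility verification needed for $\iota_{sp}^{-1}$ in the inductive step: the delicate bookkeeping tying successive cascading sequences on the LR side to successive negative-filling insertions on the $sp$ side is exactly where the two cascading lemmas do the heavy lifting.
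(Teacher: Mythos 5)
Your proposal follows essentially the same route as the paper: induct on $\Delta=|\lambda|-|\mu|$, reduce well-definedness to showing $F(\mathrm{Im}\,\iota_{LR})=\mathrm{Im}\,\iota_{sp}$, unwind both recursions into the sequence of cascaded strips $s_k$ on one side and appended negative entries $\overline{i_k}$ on the other, and let \Cref{cascades1} and \Cref{cascades2} (with $r+1=\lambda_{\ell(\lambda)}$) supply the order comparisons that match semistandardness of the appended box and of the slide on the two sides. The only substantive point you state more explicitly than the paper is the parity check that $wt(Y)-\epsilon_{m-1}-\epsilon_m$ remains of the form $(2\delta)'$, which is correct; the core compatibility chain ($i_k$ comparisons $\Leftrightarrow$ position of $1\in s_k$ $\Leftrightarrow$ $m_k$ comparisons, after noting the composed sequences still satisfy (I)--(III)) is only gestured at, but it is exactly the argument the paper carries out.
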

\begin{proof}
We need to show that $F$ is a well-defined bijection, that is that $F(Im\;\iota_{LR}) = Im\;\iota_{sp}$.\\

Since these functions are injective, we know they have partial inverses. That means that, given a Littlewood--Richardson tableau $Y$ and the corresponding $sp$-highest weight tableau $Y'=F(Y)$, we can construct tentative preimages $Y_0$ and $Y_0'$ with same $(\lambda, \mu)$ such that $Y=\iota_{LR}(Y_0)$ and $Y'=\iota_{sp}(Y_0')$ precisely when $Y_0$ and $Y_0'$ are in the respective domains. 
Hence, our aim is to show that these are equivalent, i.e. $Y_0\in Dom(\iota_{LR})\Leftrightarrow Y'_0\in Dom(\iota_{sp})$, and it follows that $F(Y_0)=Y_0'$.\\

First, note that by \Cref{nicecascade} reversing the cascade preserves the semistandard and LR-conditions.
Moreover, if $Y_0$ and $Y_0'$ are semistandard, then the sliding does nothing to the highest weight and LR-conditions, by the same logic used in \Cref{iC} and \Cref{iA}. Thus, we only have to show that both the slide to the left and the added entry preserve semistandardness in $Y\mapsto Y_0$ if and only if they do in $Y'\mapsto Y'_0$. \\

By applying the injective maps many times we delete every filling in $Y_0$ and $Y_0'$. Therefore, we can describe the tableaux by looking at sequences $Y_0 \mapsto Y=Y_1 \mapsto\cdots \mapsto Y_{N}$ and $Y'_0 \mapsto Y=Y'_1 \mapsto\cdots \mapsto Y'_{N}$.
Our crucial observation is that the maps in both side are recording the change to $\mu$. In the $sp$ side this is very direct, as $Y_k'\mapsto Y_{k+1}'$ deletes the filling $\overline{\,i_k}$ where $i_k$ is the row changing in $\mu$. 
Meanwhile, in the $LR$ side we have that $Y_k\mapsto Y_{k+1}$ deletes $m_k\in s_k$ while the entry $1\in s_k$ is in the $i_k$-th row.\\

To apply our cascading results, it is important to note that $\iota_{LR}^k$ is equivalent to first cascading the sequences $s_1, \cdots, s_k$ then deleting the last $k$ entries. Moreover, we can see inductively on $k$ that these sequences will still satisfy $(I)-(III)$ before deletion because, by semistandardness, the deleted fillings that appear before $m_k\in s_k$ are all bigger than $m_k$, while the ones that appear after $m_k\in s_k$ are all smaller than $m_k$.\\

We then have $\overline{\,i_1}\leq \overline{\,i_2}$ if and only if $i_1\ge i_2$ if and only if $1\in s_1$ is below $1\in s_2$. However, since $1\in s$ is always left-most, this is equivalent to $1\in  s_1$ appearing before $1\in s_2$ in the reading, so we can apply \Cref{cascades1}, to conclude this happen if and only if $m_1\leq m_2$.
Lastly, we set $r+1=\lambda_{\ell(\lambda)}$ and apply \Cref{cascades2}. For $r_0=1$, we obtain that $\overline{\,i_1}\leq \overline{\,i_{r+1}}$ if and only if $m_1\leq m_{r+1}$, which settles the equivalence of the semistandardness of the added filling. Moreover, for $r_0=r$, we get that $\overline{\,i_k}\leq \overline{\,i_{k+1}}$ for $k\leq r$ if and only if $m_k\leq m_{k+r}$ for $k\leq r$.
In other words, we conclude that the sliding in one side is semistandard precisely when it is in the other.
\end{proof}

\subsection{For \texorpdfstring{$\;\ell(\lambda)>n\;$}{l(lambda)}}
In the case of $\ell(\lambda)>n$,
some of the tableaux in
\[Im(F)=\begin{Bmatrix}
    \text{$sp$-highest weight}\\
    \text{semistandard tableaux}\\ 
    \text{of shape $\lambda$}\\ 
    \text{and weight $\mu$}
\end{Bmatrix}\]
might have fillings that are not $\Acal_n$, so only some of the tableaux in $Im(F)$ are in $\Bscr_{gl_{2n}}(\lambda)$.
\begin{ex}
    Let $\lambda=(3,2,1,1)$ and $\mu=(2,1)$, where $F$ is given by
    \[\ctableau
\begin{tikzcd}[row sep = 0.5em, ampersand replacement=\&]
    \adjustbox{scale=0.8}{\begin{ytableau}
    \none & \none & 1 \\
    \none & 2 \\
    3\\
    4\\
    \end{ytableau}}
    \arrow[r]\&
    \adjustbox{scale=0.8}{\begin{ytableau}
    \none & \none & 1 \\
    \none & 2 \\
    \overline{\,2\,}\\
    \overline{\,1\,}\\
    \end{ytableau}}
    \& \text{and} \&
    \adjustbox{scale=0.8}{\begin{ytableau}
    \none & \none & 1 \\
    \none &  2 \\
    1\\
    2\\
    \end{ytableau}}
    \arrow[r]\&
    \adjustbox{scale=0.8}{\begin{ytableau}
    \none & \none &1 \\
    \none &  \overline{\,1\,} \\
    3 \\
    \overline{\,3\,}\\
    \end{ytableau}}
\end{tikzcd}\]
Thus, the branching number is $2$ for $n\ge 3$, but only the first tableau should be counted for $n=2$.
\end{ex}
It is easy to see in our inductive process when these problematic fillings arise.
\begin{lemma}\label{1filling}
    The image $F(Y)$ has fillings in $\Acal_n$ if and only if no $\iota_{LR}^k(Y)$ has a $1$ filling below the $n$-th row.
\end{lemma}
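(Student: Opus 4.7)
The plan is to leverage the inductive definition $F=\iota_{sp}^{-1}\circ F \circ \iota_{LR}$ together with the correspondence established in the proof of \Cref{Fgood}. That correspondence tells us that the $k$-th application of $\iota_{sp}^{-1}$ reinserts an entry $\overline{\,i_k\,}$, where $i_k$ is the row of $1\in s_k$ in $Y_k:=\iota_{LR}^k(Y)$. Since \Cref{sptab} together with $\ell(\mu)\leq n$ forces every positive filling of $F(Y)$ to be at most $n$, the condition $F(Y)\in \Bscr_{gl_{2n}}(\lambda)$ is equivalent to $i_k\leq n$ for every $k\geq 0$. Under this reformulation the $(\Leftarrow)$ direction is immediate: if no $Y_k$ has a $1$ below row $n$, then $1\in s_k$, being itself a $1$ of $Y_k$, sits in row $\leq n$.

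For the $(\Rightarrow)$ direction I would argue by induction on $\Delta=|\lambda|-|\mu|$. The recursive formula for $F$ combined with the inductive hypothesis applied to $Y_1$ reduces the claim to showing that, assuming $i_0\leq n$ and that no $Y_k$ with $k\geq 1$ has a $1$ below row $n$, the tableau $Y$ itself has no $1$ below row $n$. Supposing for contradiction that $Y$ contains such a $1$ at some row $r>n$, the first step is to locate it precisely. Because $r>n\geq \ell(\mu)$, we have $\mu_r=0$, so the leftmost skew box in row $r$ is $(r,1)$, and weak row-increase makes it a $1$. Column strictness then forces $(r{-}1,1)\in\mu$, i.e.\ $\mu_{r-1}\geq 1$, which given $\ell(\mu)\leq n$ is only possible when $r=n{+}1$. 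So the offending $1$ must sit exactly at $(n{+}1,1)$.

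The conclusion is then a matter of tracking this distinguished box through $\iota_{LR}$. Since $i_0\leq n$ we have $1\in s_0$ in a row $\leq n$, so $(n{+}1,1)\notin s_0$ and the cascade leaves it unchanged. For the subsequent deletion and slide to miss $(n{+}1,1)$ I would verify that the last row of $Y$ lies strictly below row $n{+}1$: otherwise column $1$ would consist only of the box $(n{+}1,1)$ with value $1$, forcing $m_0=1$, $s_0=(1)$, and hence $i_0=n{+}1>n$, a contradiction. Thus the $1$ at $(n{+}1,1)$ is carried intact into $Y_1$, which contradicts the inductive hypothesis and finishes the argument. The main obstacle is the combinatorial one of pinning down that a rogue $1$ below row $n$ must occupy exactly $(n{+}1,1)$ and then verifying that none of the three steps of $\iota_{LR}$ can disturb that box.
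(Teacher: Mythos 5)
Your proof is correct and follows essentially the same route as the paper's: both arguments reduce the statement to the observation (from \Cref{Fgood}) that the negative fillings of $F(Y)$ are exactly the $\overline{i_k}$ for $i_k$ the row of $1\in s_k$ in $\iota_{LR}^k(Y)$, and your $(\Rightarrow)$ direction merely spells out carefully, by localizing a rogue $1$ to the box $(n\!+\!1,1)$ and tracking it through one application of $\iota_{LR}$, the step the paper treats as implicit --- namely that a $1$ below row $n$ in some $\iota_{LR}^k(Y)$ would force some $1\in s_j$ to lie below row $n$. One small correction: the claim that \Cref{sptab} together with $\ell(\mu)\leq n$ forces \emph{every} positive filling of $F(Y)$ to be at most $n$ is false as an unconditional statement (the paper's own example with $\lambda=(3,2,1,1)$, $\mu=(2,1)$, $n=2$ produces a positive filling $3$); what is true, and all that your reformulation needs, is that the positive fillings lie in $\Acal_n$ \emph{whenever} the negative ones do, since any entry $j>n$ must occur in a $\{j,\overline{\,j\,}\}$-pair because $\mu_j=0$.
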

\begin{proof}
    As we noted on \Cref{Fgood}, the fillings of the negative entries of $F(Y)$ correspond to the row number of the $1$s in the sequences altered by $\iota_{LR}$. Thus, for them to all be in $\Acal_n$, we need the $1$s to be above the $n$-th row. Furthermore, if all the negative fillings are in $\Acal_n$ then so are the positive ones, because the only unpaired entries are in $\mu \in X^+$ of $sp_{2n}$.
\end{proof}

This condition can also be seen in the final tableaux $Y$.

\begin{defi}\label{def: n-symp}
    We say a semistandard tableau $Y$ is $n$-symplectic if $Y(n+i, j)\ge2i$ for all $i,j$.
\end{defi}
\begin{lemma}\label{nsymp}
    The image $F(Y)$ has fillings in $\Acal_n$ if and only if $Y$ is $n$-symplectic.
\end{lemma}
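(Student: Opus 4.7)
The plan is to use \Cref{1filling} to rewrite the claim as: $Y$ is $n$-symplectic if and only if $r_k \leq n$ for every $k \geq 1$, where $r_k$ denotes the row of $1 \in s_k$ and $s_k$ is the sequence chosen at the $k$-th application of $\iota_{LR}$. I would then induct on $|\lambda \setminus \mu|$. The base case $\lambda = \mu$ is trivial. For the inductive step, set $Y' = \iota_{LR}(Y)$; the sequences arising from iterating $\iota_{LR}$ on $Y'$ are precisely $s_2, s_3, \ldots$, so by the inductive hypothesis applied to $Y'$ the claim reduces to the single-step equivalence
\[
Y \text{ is $n$-symplectic} \iff r_1 \leq n \text{ and } Y' \text{ is $n$-symplectic}.
\]

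For $(\Rightarrow)$, condition $(I)$ of \Cref{nicecascade} places a $1$ at the leftmost cell of row $r_1$ in $Y$, while $n$-symplecticity would force this cell to hold a value at least $2(r_1 - n)$ if $r_1 > n$; hence $r_1 \leq n$. To verify that $Y'$ is $n$-symplectic, I inspect each cell $(n+i, c)$ with $i \geq 1$ that survives in $Y'$. Since $r_1 \leq n$, the absorption $\mu \mapsto \mu + \epsilon_{r_1}$ does not affect the cells in rows $n + i$, so values either stay the same or decrease by one (for cells in $s_1$). The only dangerous case is $Y(n+i, c) = 2i$ with $(n+i, c) \in s_1$, since the cascade would drop the value to $2i - 1$.

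The main obstacle is ruling out this dangerous case. By condition $(III)$, $m_1 \in s_1$ equals $Y(\ell(\lambda), \mu_{\ell(\lambda)} + 1)$, so $n$-symplecticity yields $m_1 \geq 2(\ell(\lambda) - n)$ whenever $\ell(\lambda) > n$. Now suppose $(n+i, c) \in s_1$ with $Y(n+i, c) = 2i$. If $c > \mu_{n+i} + 1$, semistandardness together with the $n$-symplectic bound on row $n+i$ forces $Y(n+i, c-1) = 2i$, and this cell appears after $(n+i, c)$ in the reading. Using the $n$-symplectic bound on row $n+i+1$ (entries $\geq 2i+2$) and column strictness in column $c-1$ (entries above $(n+i, c-1)$ are $< 2i$), I would show that no $2i+1$ can appear between $(n+i, c)$ and $(n+i, c-1)$ in the reading, so $(n+i, c-1)$ is a $2i$ after $(n+i, c)$ but before $(2i+1) \in s_1$, contradicting condition $(II)$. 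In the boundary case $c = \mu_{n+i} + 1$, the same analysis forces $m_1 = 2i$, and then $2i = m_1 \geq 2(\ell(\lambda) - n)$ gives $\ell(\lambda) \leq n + i$; combined with the requirement that $(n+i, c)$ differ from the last filling in order to survive in $Y'$ (which forces $\ell(\lambda) > n + i$), this is again a contradiction.

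For $(\Leftarrow)$, I argue the contrapositive: assuming $Y$ is not $n$-symplectic and $r_1 \leq n$, I show $Y'$ is not $n$-symplectic. Any violation $Y(n+i, c) < 2i$ at a cell retained in $Y'$ descends directly, since $Y'(n+i, c) \leq Y(n+i, c) < 2i$. The only remaining case is that the single violation lies at the last filling $(\ell(\lambda), \mu_{\ell(\lambda)} + 1)$, which gets deleted; here the sliding step brings the neighbouring cell $(\ell(\lambda), \mu_{\ell(\lambda)} + 2)$ (or a value cascaded from it) into the position $(\ell(\lambda), \mu_{\ell(\lambda)} + 1)$ of $Y'$, and the inequality $m_1 = Y(\ell(\lambda), \mu_{\ell(\lambda)} + 1) < 2(\ell(\lambda) - n)$ together with the LR condition and the $(2\delta)'$ weight structure forces the incoming value to again violate the $n$-symplectic bound, by a dual version of the reasoning in the forward direction. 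Combining both directions closes the induction.
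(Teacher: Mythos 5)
Your reduction via \Cref{1filling} to the statement ``$Y$ is $n$-symplectic iff the $1$ of every sequence $s_k$ sits in a row $\leq n$'', followed by induction on a single application of $\iota_{LR}$, is a genuinely different organization from the paper's: the paper argues globally, tracking one box through all $m-1$ cascades it undergoes and locating a violating entry at the tail of the sequence it belongs to, rather than isolating a one-step equivalence. Your forward direction is essentially sound: the only danger is a cell of $s_1$ in row $n+i$ with value exactly $2i$, and your use of condition $(II)$ against the cell immediately to its left (which must also equal $2i$, and which no $2i+1$ can precede in the reading by the row-$(n+i+1)$ bound and column-strictness) does rule it out, with the leftmost-column case collapsing to the deleted last filling.

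The gap is in the backward direction, in the case where the unique violation sits at the deleted bottom-left cell $(\ell(\lambda),1)$. You assert that the value sliding into that position must again violate the bound; this does not follow and is false in general --- the cell $(\ell(\lambda),2)$ can carry any value $\geq 2(\ell(\lambda)-n)$, and after losing at most $1$ in the cascade it need not drop below the bound. What actually saves this case is that it is vacuous, and your sketch does not establish that. Concretely: since every prefix of the reading is a partition and the weight $\nu=(2\delta)'$ satisfies $\nu_{2k-1}=\nu_{2k}$, the last filling $m_1$ must be \emph{even} (otherwise the prefix omitting it has fewer $m_1$'s than $(m_1+1)$'s). On the other hand, writing $\ell(\lambda)=n+i$, a sole violation at $(\ell(\lambda),1)$ forces $m_1\leq 2i-2$; for $i=1$ this is impossible, and for $i\geq 2$ the cell $(\ell(\lambda)-1,1)$ lies in the skew shape (here you must use $\ell(\mu)\leq n$, which your write-up never invokes), so column-strictness gives $2(i-1)\leq Y(\ell(\lambda)-1,1)\leq m_1-1\leq 2i-3$, a contradiction. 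Replacing your ``dual reasoning'' claim by this vacuity argument closes the hole; as written, that step would fail.
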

\begin{proof}
    {$(\Leftarrow)$} If $F(Y)$ has a filling outside $\Acal_n$ then, by \Cref{1filling}, some $\iota^k_{LR}Y(n+i,j)=1$.\\
    Then, $Y(n+i,j)=m$ implies this box was cascaded $m -1$ times. So, it is part of a sequence until $2m$, whose last entry must be below the $(n+i+m)$-th row. That gives us an entry $Y(n+i', j')=2m \leq 2(m+i)\leq 2i'$.
    
    ${(\Rightarrow)}$ If $F(Y)$ has fillings in $\Acal_n$ then so does $F(\iota_{LR}(Y))=\iota_{sp}(F(Y))$. Then, by induction on $\Delta=|\lambda|-|\mu|$, we have $\iota_{LR}(Y)(n+i, j)>2i$. 
    Therefore, we only need to consider the fillings in the altered sequence.
    However, all but the non-deleted entries in $s$ decrease through $\iota_{LR}$, and therefore they also respect the inequality above. Moreover, by \Cref{1filling}, the $1$ is before the $n$-th row and the last entry is some $Y(n+i, 1)=m$, where 
    \[2(i-1)<\iota_{LR}(Y)(n+i-1,1)\leq m-2\vspace{-0.8cm}\]

\end{proof}

We then use the known branching rule to prove the crystal model for the symplectic branching.

\branching*
\begin{proof}
    The RHS is composed precisely by the tableaux of shape $\lambda$ and weight $\mu$ in $F(Y)$ with fillings in $\Acal_n$. Thus, by using \hyperref[sundaram]{Sundaram's branching rule} in one side, with \Cref{Fgood} and \Cref{nsymp} on the other, we obtain
    \[LHS=\sum_{\delta}(sp_nc)^{\lambda}_{\mu, (2\delta)'}= (ImF\;\cap \; \Bscr_{gl_{2n}}(\lambda))= RHS\]
    where $(sp_nc)^{\lambda}_{\mu, \nu}$ is the number of $n$-symplectic Littlewood--Richardson tableaux.
\end{proof}

\bibliographystyle{alpha}
\bibliography{ref}

\end{document}